\tikzset{>=stealth}
\def\@tocline#1#2#3#4#5#6#7{\relax
	\ifnum #1>\c@tocdepth % then omit
	\else
	\par \addpenalty\@secpenalty\addvspace{#2}%
	\begingroup \hyphenpenalty\@M
	\@ifempty{#4}{%
		\@tempdima\csname r@tocindent\number#1\endcsname\relax
	}{%
		\@tempdima#4\relax
	}%
	\parindent\z@ \leftskip#3\relax \advance\leftskip\@tempdima\relax
	\rightskip\@pnumwidth plus4em \parfillskip-\@pnumwidth
	#5\leavevmode\hskip-\@tempdima
	\ifcase #1
	\or\or \hskip 2em \or \hskip 2em \else \hskip 3em \fi%
	#6\nobreak\relax
	\dotfill\hbox to\@pnumwidth{\@tocpagenum{#7}}\par
	\nobreak
	\endgroup
	\fi}
\newtheorem{intro-thm}{Theorem}[]
\theoremstyle{plain}
\newtheorem{thm}{Theorem}[section]
\newtheorem{theorem}[thm]{Theorem}
\newtheorem{thmx}{Theorem}
\newtheorem{q}[thm]{Question}
\newtheorem{lemma}[thm]{Lemma}
\newtheorem{corollary}[thm]{Corollary}
\newtheorem{proposition}[thm]{Proposition}
\theoremstyle{definition}
\newtheorem{remark}[thm]{Remark}
\newtheorem{definition}[thm]{Definition}
\newtheorem{example}[thm]{Example}
\newcommand{\Proj}{{\P roj}}
\newcommand{\codim}{{\rm codim}}
\newcommand{\Spec}{{\rm Spec \,}}
\renewcommand{\tilde}{\widetilde}
\newcommand{\sE}{{\mathcal E}}
\newcommand{\sG}{{\mathcal G}}
\newcommand{\sH}{{\mathcal H}}
\newcommand{\sI}{{\mathcal I}}
\newcommand{\sK}{{\mathcal K}}
\newcommand{\sU}{{\mathcal U}}
\newcommand{\sV}{{\mathcal V}}
\newcommand{\sX}{{\mathcal X}}
\newcommand{\sY}{{\mathcal Y}}
\newcommand{\sZ}{{\mathcal Z}}
\newcommand{\G}{{\mathbb G}}
\renewcommand{\P}{{\mathbb P}}
\newcommand{\Q}{{\mathbb Q}}
\newcommand{\Z}{{\mathbb Z}}
\newcommand{\colim}{{\rm colim \,}}
\newcommand{\DM}[2]{\mathbf{DM}_{#2}^{\mathit{eff}}(#1)}
\newcommand{\DA}[2]{\mathbf{DA}_{#2}^{\mathit{eff}}(#1)}
\newcommand{\DAs}[2]{\mathbf{DA}_{#2}(#1)}
\newcommand{\twistedM}{M_\chi}
\newcommand{\SH}{\mathbf{SH}}
\begin{document}
% Document information
\title{A Motivic Riemann-Roch Theorem for Deligne-Mumford Stacks}

\author{Utsav Choudhury}
\address{Stat Math unit, Indian Statistical Institute Kolkata, 203 Barrackpore Trunk Road, Kolkata 700108 India}
\email{prabrishik@gmail.com}

\author{Neeraj Deshmukh}
\address{Institute of Mathematics, Polish Academy of Sciences, Ul. \'{S}niadeckich 8, 00-656 Warsaw, Poland
}
\email{ndeshmukh@impan.pl}

\author{Amit Hogadi}
\address{Department of Mathematics, Indian Insititute of Science Education and Research (IISER) Pune, Dr. Homi Bhabha road, Pashan, Pune 411008 India}
\email{amit@iiserpune.ac.in}

\begin{abstract}
We develop a  motivic cohomology theory,  representable in the Voevodsky's triangulated category of motives,  for smooth separated Deligne-Mumford stacks and  show that the resulting higher Chow groups  are  canonically isomorphic to the higher $K$-theory of such stacks. This generalises the Grothendieck-Riemann-Roch theorem to the category of smooth Deligne-Mumford stacks. 
\end{abstract}

\maketitle

\section{Introduction}
Let $k$ be a field and let $X$ be a smooth $k$-scheme. The (higher) algebraic $K$-theory of $X$ is defined by the higher homotopy groups of the classifying space of a category associated with the category of vector bundles on $X$. On the other hand Bloch's (higher) Chow groups \cite{blochhigherchow}  are described as cohomology groups of Bloch's cycle complexes in terms of generators (special algebraic cycles on $X \times \mathbb{A}^n_k$) and relations (homotopies parametrized by the path object $\mathbb{A}^1$). The Grothendieck-Riemann-Roch theorem provides an isomorphism between the rational higher $K$-groups of $X$ and higher Chow groups \cite{blochhigherchow}. Later, Voevodsky proved that the higher Chow groups of $X$ are representable in the triangulated category of motives \cite{MWV},  $\DM{k, \Z}{}$. More precisely, there are motivic complexes $\Z(i)[n] \in  \DM{k, \Z}{}$ and $M(X) \in \DM{k, \Z}{}$ (the motive of $X$) such that we have the following chain of natural isomorphisms,
$$H^{n,i}(X, \Z) \cong Hom_{\DM{k, \Z}{}}(M(X), \Z(i)[n]) \cong CH^i(X, 2i-n)$$
between motivic cohomology groups (defined as hypercohomology of a complex of sheaves) on the left \cite{MWV} and higher Chow groups on the right. In particular, for smooth $k$-schemes we get a canonical isomorphism between suitable motivic cohomology groups and higher $K$-groups, rationally.

Algebraic stacks arise naturally in the study of families of algebraic varieties because many moduli problems do not give fine moduli space and therefore many important constructions (e.g, universal families) are not possible. Enlarging the category of schemes to include algebraic stacks resolve these issues. Therefore it is natural to search for extensions to the category of algebraic stacks of theorems which are true for varieties. In this article we extend the results mentioned in the previous paragraph to the category of smooth Deligne-Mumford stacks. 

Our first goal is to construct a suitable notion of higher Chow groups for a Deligne-Mumford stack rationally which is representable in $\DM{k, \Q}{}$. The integral Borel-style higher Chow groups of quotient stacks defined by Edidin and Graham is known to be representable in $\DM{k, \Z}{}$ (see \cite{cdh}). Rationally, these Borel-style higher Chow groups of a smooth Deligne-Mumford stack are isomorphic to the higher Chow groups of its coarse moduli space. Hence, it satisfies \'etale descent. Therefore, these higher Chow groups of Deligne-Mumford stacks are not suitable for Riemann-Roch type isomorphisms as $K$-theory of Deligne-Mumford stacks does not satisfy \'etale descent rationally. In \cite{toenthesis}, To\"{e}n proved that rationally $K$ theory of a smooth Deligne-Mumford stack $\mathcal{X}$ is isomorphic to the twisted (by character sheaf) \'etale $K$-theory of  $C^t_{\mathcal{X}}$, the stack classifying cyclic subgroups of automorphisms.  Inspired by this, we construct a motive $M_{\chi}(\mathcal{X}) \in \DM{k, \Q}{}$ of a smooth Deligne-Mumford stack  $\mathcal{X}$ and using this define motivic cohomology $H^{p,q}_{\chi}(\mathcal{X}, \Q)$. This is the main construction of this article. We use this construction to obtain the following motivic version of To\"{e}n's Grothendieck-Riemann-Roch isomorphism \cite{toengtheorynotes,Toen},

\begin{thmx}[see Theorem \ref{theorem-riemann-roch}]
	Let $F$ be a smooth separated Deligne-Mumford stack over a field $k$. Then the motivic Chern character induces an isomorphism 
	\[ch: K_i(F)_\Q\rightarrow \prod_j H_\chi^{2j-i,j}(F,\Q).\]
	The above isomorphism is covariantly functorial along the coarse moduli morphism $p: F\rightarrow X$.	
\end{thmx}

We also show that in case of smooth proper Deligne-Mumford stack $\mathcal{X}$, the Chow groups $H^{2*, *}_{\chi}(\mathcal{X}, \Q)$ is canonically isomorphic to To\"en's Chow-groups $CH^*_{\chi}(\mathcal{X},\Q)$ defined in \cite{Toen}.% (add this result).

The above theorem gives us an interpretation of the rational $K$-groups of $F$ in terms of the motivic cohomology of the motivic complexes $M_{\chi}(\sX)$. It is not immediately clear how the groups $H_\chi^{2j-i,j}(F,\Q)$ are related to the intersection theory of $F$. Nevertheless, one can show that after adding enough root of unity to the coefficient ring $\Q$, the groups $H_\chi^{2j-i,j}(F,\Q)$ become isomorphic to the rational motivic cohomology of the inertia stack of $\sX$.

\begin{thmx}[see Theorem \ref{comp inertia}]
	Let $\sX$ be a smooth separated Deligne-Mumford defined over an algebraically closed field. Denote by $I_\sX$ the inertia stack of $\sX$. Then we have an isomorphism,
	\[H_\chi^{2j-i,j}(\sX,\Q(\mu_\infty))\simeq H^{2j-i,i}(I_\sX,\Q(\mu_\infty)) \]
\end{thmx}
The above isomorphism is \textit{not} canonical and depends on the choice of roots of unity.

In the case when $\sX$ is a quotient stack of the form $[X/GL_n]$ with $X$ smooth quasi-projective, the above result tells us that the groups $H_\chi^{2j-i,j}(\sX,\Q(\mu_\infty))$ can be computed as the Edidin-Graham-Totaro higher Chow groups of the inertia stack $I_\sX$ with $\Q(\mu_\infty)$-coefficients (see \cite[Theorem 1.5]{cdh}). By \cite[Theorem 3.3]{MotivesDM}, when $\sX$ is tame, this is the same as computing the higher Chow groups of the coarse space $I_\sX$. This recover the result of Krishna-Sreedhar \cite{krishnasreedhar}. In fact, to the best of our knowledge, the above two Theorems recover all the currently known Riemann-Roch formalisms of  Krishna-Sreedhar \cite{krishnasreedhar}, Edidin-Graham \cite{edidingrahamriemannroch, edidingrahamnonabelian}, To\"{e}n \cite{toenthesis,Toen}, etc.\\

\noindent\textbf{Outline.} We begin by outlining the constructions of the basic motivic categories and functors in Section \ref{section-basic-constructions}. We primarily work with Ayoub's category of motivic sheaves without transfers $\DAs{\sX,\Q}{}$ and the rational motivic stable homotopy category $\textbf{SH}(\sX)_\Q$. Also, we freely use that equivalence between $\mathbf{DM}_{\acute{e}t}(-,\Q)$ and $\mathbf{DA}_{\acute{e}t}(-,\Q)$ as proved in \cite{ayoubetalerealisation,cd2019}.

In Section \ref{section-character-stack}, we present our main contruction. We use  stack $C^t_\sX$ (see Section \ref{section-character-stack}) to define the \textit{twisted motive} $M_\chi (\sX)\in \DM{k,\Q}{}$ associated to $\sX$.
 
In Section \ref{section-riemann-roch}, we prove the Grothendieck-Riemann-Roch isomorphism for smooth Deligne-Mumford stacks (Theorem \ref{theorem-riemann-roch}), where the target of the isomorphism is the new motivic cohomology $H^{p,q}_{\chi}$ defined in Section \ref{section-character-stack}. Further, in Theorem \ref{comp inertia}, we show that, after tensoring with $\Q(\mu_\infty)$, the new motivic cohomology  $H^{p,q}_{\chi}$ is precisely the motivic cohomology of the coarse moduli space of the inertia stack.

In Section \ref{section-motive-properties}, we prove some useful formulas for the new motive $M_{\chi}(\mathcal{X})$. While most of the usual properties of motives hold for \textit{stabiliser preserving morphisms}, the projective bundle formula fails to hold when the structure map $\P(\sE)\rightarrow \sX$ of the projective bundle is not stabiliser preserving (Example \ref{example-projective-bundle-formula-fails}). $M_{\chi}(\mathcal{X})$ is a geometric motive for smooth Deligne-Mumford stack $\mathcal{X}$ and it is a Chow motive if moreover $\mathcal{X}$ is proper. Using this, it is shown that $M_{\chi}(\mathcal{X})$ is isomorphic to $\iota \circ h_{\chi}(\mathcal{X})$ for $\mathcal{X}$ smooth and proper. Here $\iota$ is the inclusion of the category Chow motives into Voevodsky's category of motives and $h_{\chi}(\mathcal{X})$ is the Chow motive of $\mathcal{X}$ defined by To\"en (see \cite{Toen}).

Finally, in Section \ref{section-computations}, we discuss some explicit examples of the motive $M_\chi (\sX)$.\\

\noindent\textbf{Acknowledgements.} The second author was supported by the project KAPIBARA funded by the European Research Council (ERC) under the European Union's Horizon 2020 research and innovation programme (grant agreement No 802787). He also benefitted from the support of the Swiss National Science Foundation (SNF), project 200020\_178729. A part of this work was carried out in Indian Statistical Institute, Kolkata, when the second author was visiting the first author and they thank the institute for excellent academic environment.

The second author thanks Joseph Ayoub and Andrew Kresch for useful conversations around the subject of this work.

\section{Basic constructions} \label{section-basic-constructions}
Let $k$ be a field. For any  Deligne-Mumford stack $\sX$, let $Sm/\sX$ denote the category of smooth schemes over $\sX$ and let $Smrep/\sX$ be the category of smooth Deligne-Mumford stack over $\sX$ such that the structre map is representable.

Let $M$ be either the category of simplicial sets equipped with the usual model structure or the category of unbounded chain complexes of $R$-modules ($R = \mathbb{Z}, \mathbb{Q}$), equipped with the projective model structre.  For any category $C$ , let $\textrm{PSh}(C, M)$ denote the category of presheaves on $C$ taking values in $M$.  Let $e : \textrm{PSh}(Sm/\sX, M) \to \textrm{PSh}(Smrep/\sX, M)$ be the extension of presheaf functor which is left adjoint to the restriction $r : \textrm{PSh}(Smrep/\sX, M) \to \textrm{PSh}(Sm/\sX, M)$ induced by the inclusion $Sm/\sX \to  Smrep/\sX$. Note that this gives a Quillen adjunction , where both sides are equipped with the global projective model structure induced from $M$.

Let $\DAs{\sX,\Q}{\acute{e}t}$ (resp. $\DA{\sX,\Q}{\acute{e}t}$) be the category of stable etale motives without transfers with $\mathbb{Q}$ coefficients ( resp. effective etale motives without transfers with $\mathbb{Q}$-coefficients) with respect to the etale stable $\mathbb{A}^1$-(projective ) model structure on  the category of $T$-spectra  on $\textrm{PSh}(Sm/\sX, M)$ (resp. $\mathbb{A}^1$-\'etale model structure on $ \textrm{PSh}(Sm/\sX, M)$), where $M$ is the category of chain complexes of $\mathbb{Q}$-vector spaces. Same construction could be done on $T$-spectra on   $\textrm{PSh}(Smrep/\sX, M)$ (resp. on $ \textrm{PSh}(Smrep/\sX, M)$) to get the category of stable etale motives without transfer with $\mathbb{Q}$-coefficients $\DAs{Smrep/\sX, \Q}{\acute{e}t}$ (resp. effective etale motives without transfers with $\mathbb{Q}$-coefficients 
$\DA{Smrep/\sX, \Q}{\acute{e}t}$). 

\begin{lemma} \label{extension}
The adjunction $(e,r)$ induces a Quillen equivalence $$\DAs{\sX, \Q}{\acute{e}t} \cong \DAs{Smrep/\sX, \Q}{\acute{e}t}$$ (resp. $\DA{\sX, \Q}{\acute{e}t} \cong \DA{Smrep/\sX, \Q}{\acute{e}t}$).
\end{lemma}

\begin{proof}
We will show that the Quillen adjunction $$e\!:\!\textrm{PSh}(Sm/\sX, M) \rightleftarrows \textrm{PSh}(Smrep/\sX, M)\!:\!r$$  induces Quillen equivalence $$e:\DA{\sX, \Q}{\acute{e}t} \rightleftarrows \DA{Smrep/\sX, \Q}{\acute{e}t}:r.$$ We will show that $Le \circ Rr \cong Id$ and $Rr \circ Le \cong Id$. Here $Le$ is $\mathbb{A}^1$ left derivation of $e$ and $Rr$ is $\mathbb{A}^1$-right derivation of $r$. Note that for any presheaf $F$ on $Sm/\sX$, $F \to r \circ e (F)$ is an isomorphism. Let $K$ be a $\mathbb{A}^1$-fibrant cofibrant presheaf of complex of vector spaces on $Sm/\sX$. Let $e(K) \to K'$ be an \'etale fibrant replacement. Then $K \cong re(K) \to r(K')$ is an \'etale weak equivalence. As $r(K')$ is also \'etale fibrant, the morphism $K \to r(K')$ is an \'etale weak equiavelnce of \'etale fibrant objects. This implies that $K \to r(K')$ is sectionwise weak equivalence. Therefore to show $Id \cong Rr \circ Le$, it is enough to show that $K'$ is $\mathbb{A}^1$-fibrant. As $K'$ is \'etale fibrant, we need to show that for any $\sY \to \sX$ , smooth representable, the canonical morphism $\underline{Hom}(\sY, K') \to \underline{Hom}(\mathbb{A}^1_{\sY}, K')$ is an \'etale local equivalence. But $rK'$ is $\mathbb{A}^1$-fibrant being sectionwise equivalent to $K$. On the other hand for any smooth simplicial scheme $U_{\bullet}$ over $\sX$, we have
$$\underline{Hom}(U_{\bullet}, rK') \cong \underline{Hom}(U_{\bullet}, K').$$ Let $U_{\bullet} \to \sY$ be an \'etale weak equivalence where $U_{\bullet}$ is a smooth simplicial scheme over $\sX$. Then we have the following commutative diagram

\begin{center}
		\begin{tikzcd}
		\underline{Hom}(\sY, K') \arrow[r," p'"]\arrow[d, "\tilde{f}"] & \underline{Hom}(\mathbb{A}^1_{\sY}, K')  \arrow[d, "f"]\\
		 \underline{Hom}(U_{\bullet}, K') \arrow[r, "p"] &  \underline{Hom}(\mathbb{A}^1 \times U_{\bullet}, K')
		\end{tikzcd}
	\end{center}

such that the vertical arrows are etale local weak equivalence  by \'etale descent and the bottom horizontal arrow is an \'etale local weak equivalence. Therefore the top horizontal morphism is a local weak equivalence. Thus we get $Id \cong Rr \circ Le$.

Now we want to show $Le \circ Rr \cong Id$. This follows from the fact that for any  presheaf $S$ on $Smrep/\sX$, $e \circ r (S) \to S$ induces isomorphism on \'etale stalks.

\end{proof}

\begin{remark}
Let $M$ be the category of simplicial sets with the usual model structure. The unstable $\mathbb{A}^1$-etale homotopy category $\mathbf{H}_{et}(\sX)$ (resp.$\mathbf{H}_{et}(Smrep/\sX)$)  be the homotopy category with respect to the $\mathbb{A}^1$- \'etale model structure on $\textrm{PSh}(Sm/\sX, M)$ (resp. on $\textrm{PSh}(Smrep/\sX, M)$). Then the adjunction $(e,r)$ induces Quillen equivalence
$$\mathbf{H}_{et}(\sX) \cong \mathbf{H}_{et}(Smrep/\sX).$$

\end{remark}

\begin{remark}

Let $f : \sX \to \sY$ be a morphism of  DM stacks. This induces adjunction $$(f^*, f_*) :  \textrm{PSh}(Smrep/\sY, M) \rightleftarrows  \textrm{PSh}(Smrep/\sX, M),$$ which is a Quillen adjunction with repect to the $\mathbb{A}^1$-model structure on source and the target. If $M$ is the category of simplicial set, then any $\sY' \to \sY$ smooth representable, we have $f^*(\sY') = \sY' \times_{\sY} \sX$. This gives a pair of adjoint functors $$(Lf^*, Rf_*) :  \DA{Smrep/\sY, \Q}{\acute{e}t} \rightleftarrows  \DA{Smrep/\sX, \Q}{\acute{e}t}$$  $$\text{(resp. } (Lf^*, Rf_*) :  \DAs{Smrep/\sY, \Q}{\acute{e}t}  \rightleftarrows \DAs{Smrep/\sX, \Q}{\acute{e}t}.)$$

Composing with the equivalences mentioned in the lemma \ref{extension} we get adjunctions, which we denote by $$(Lf^*, Rf_*) : \DA{\sY, \Q}{\acute{e}t} \leftrightarrows \DA{\sX, \Q}{\acute{e}t},$$ $$\text{ (resp. } (Lf^*, Rf_* ):  \DAs{\sY, \Q}{\acute{e}t} \leftrightarrows \DAs{\sX, \Q}{\acute{e}t}.)$$

If $f : \sX \to \sY$ happens to be smooth then we get a Quillen adjunction 
$$(f_{\sharp}, f^*) :  \textrm{PSh}(Sm/\sX, M) \leftrightarrows  \textrm{PSh}(Sm/\sY, M),$$ 
such that 
$$f_{\sharp}(U \to \sX) = U \to \sX \to \sY.$$ 
This gives an adjunction $$Lf_{\sharp}, Rf^* : \DA{\sX, \Q}{\acute{e}t} \leftrightarrows \DA{\sY, \Q}{\acute{e}t}$$  $$\text{( resp. }(Lf_{\sharp}, Rf^*) : \DAs{\sX, \Q}{\acute{e}t} \leftrightarrows \DAs{\sY, \Q}{\acute{e}t}.)$$

\end{remark}

\begin{remark}

Let $\sK$ be the $K$-theory spectrum and $k$ be a perfect field. By \cite[6.2.3.6]{riouthesis} and the fact that $H^{2j-1}(k, \Q(j)) = 0$ for all $j$ and $H^{2j}(k, \Q(j)) =0$ for $j \neq 0$ we get $Hom_{SH(k)}(\sK, H_{\Q}) \cong \Q$. Let $ch$ be the canonical generator, therefore it can be lifted to give a map $ch : \sK \to H_{\Q}$, using Bott periodicity and adjunction we get
sequence of maps $ch_i : \sK_{\Q} \to  \mathbf{1}(i)[2i]_\Q$ of spectrums. 
The total chern character induces an isomorphism (see \cite[Remark 6.2.3.10]{riouthesis})
	\[ch_t: \sK_\Q \simeq \prod_j \mathbf{1}(j)[2j]_\Q.\]

Note that the infinite product is also a direct sum.

If $\sX$ is a stack over $k$, we have a pull back functor along the structure morphism $f: \sX\rightarrow \Spec k$,
\[f^*:\SH(k)\rightarrow \SH(\sX)\]

This allows us to pull back the above isomorphism $ch_t$ over $\sX$.\\

\end{remark}

\section{The stack $C^t_F$}\label{section-character-stack}

In this section, we describe the stack $C^t_F$ classifying cyclic subgroups of automorphism associated to any Deligne-Mumford stack $F$ as defined in \cite{Toen}. We call it the character stack of $F$.

Let $F$ be a tame Deligne-Mumford stack over a field $k$. The stack $C^t_F$ is defined as follows:

For any $k$-scheme $U$, the objects of $C^t_F$ are pairs $(u,C)$, where $u: U\rightarrow F$ and $C\subset Aut(u)$ is a cyclic subgroup. An isomorphism between any two such objects $(u,C)$ and $(u',C')$ is described by an isomorphism $\alpha: u\rightarrow u'$ which conjugates the cyclic subgroups, i.e, $\alpha^{-1}C'\alpha = C$. We have a canonical map $\pi: C^t_F \rightarrow F$ which sends any $(u,C)$ to $u$.

We can define the sheaf of characters $\chi$ on the big \'{e}tale site of $C^t_F$ as
\begin{center}
$\begin{matrix}
\chi: & (Sch/C^t_F)_{\acute{e}t} &\rightarrow & \text{Ab}\\
& ((u,C):U\rightarrow C^t_F) &\mapsto  &Hom_{Gps}(C, \G_{m,U})
\end{matrix}$
\end{center}

Consider the sheaf of group-algebras $\Q[\chi]$  (resp. $\Z[\chi]$) associated to $\chi$. This is \'etale locally (on $(Sch/C^t_F)_{\acute{e}t}$)  isomorphic to the constant sheaf with fibre $\Q[T]/(T^m-1)$ ($\Z[T]/(T^m-1)$). We have maps $p:\Q[T]/(T^m-1)\rightarrow \Q(\zeta_m)$ (p :  $\Z[T]/(T^m -1) \to \Z[T]/(\psi_m)$ where $\psi_m$ is the minimal polynomial of the primitive $m$-th root of unity), where $\Q(\zeta_m)$ is a cyclotomic extension of $\Q$. The kernels $ker(p)$ glue to give a well-defined ideal sheaf, $\sI$. We define a sheaf 
\[\Lambda_F:= \Q[\chi]/\sI \;\;\;\;(resp.\,\, \Lambda_{F, \Z}:= \Z[\chi]/\sI)\]

The above discussion culminates into the following:

\begin{lemma}
	$\Lambda_F$  (resp. $\Lambda_{F, \Z}$) is a locally constant sheaf on $ (Sch/C^t_F)_{\acute{e}t}$ and it is locally given by the constant sheaf $\Q(\zeta_m)$ (resp. $\Z[\zeta_m]$).
\end{lemma}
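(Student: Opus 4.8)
The plan is to check the two asserted properties separately, reducing everything to the local description of $\chi$ already obtained. First I would unwind the definition: by the preceding discussion $\Q[\chi]$ is étale locally on $(Sch/C^t_F)_{\acute{e}t}$ isomorphic to the constant sheaf with fibre $\Q[T]/(T^m-1)$, where $m$ is the order of the relevant cyclic subgroup. Over such an étale-local chart, the ideal sheaf $\sI$ restricts to the constant sheaf $\ker(p)$ for the fixed projection $p:\Q[T]/(T^m-1)\to \Q(\zeta_m)$, so the quotient $\Lambda_F = \Q[\chi]/\sI$ restricts to the constant sheaf with fibre $\Q[T]/(T^m-1)\big/\ker(p) \cong \Q(\zeta_m)$ (and $\Z[T]/(T^m-1)\big/(\psi_m) \cong \Z[\zeta_m]$ in the integral case, since $\Z[T]/(\psi_m)$ is by definition $\Z[\zeta_m]$). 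This already gives the "locally given by" half of the statement; the content is that these local identifications are compatible enough to exhibit $\Lambda_F$ as locally constant, i.e. that the gluing producing $\sI$ in the first place is well-defined — which is exactly what was asserted in the construction and which I may assume.

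Next I would address local constancy as a global statement. The subtlety is that $m$ is not globally constant on $C^t_F$: it varies over the connected components indexed by the order of the cyclic subgroup $C$. So I would argue componentwise, or more precisely stratify $C^t_F$ by the locally closed (in fact, open and closed, since $C^t_F$ classifies a \emph{chosen} cyclic subgroup and its order is locally constant in families) substacks $C^t_{F,m}$ on which the tautological cyclic subgroup has order $m$. On each $C^t_{F,m}$ the sheaf $\chi$ is a locally constant sheaf of finite abelian groups with fibre $\Z/m$ (this is where tameness enters: $m$ is invertible, so $\mu_m$ is étale and $\underline{\Hom}_{Gps}(C,\G_m)$ is represented by an étale group scheme, hence a locally constant sheaf on the étale site). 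Consequently $\Q[\chi]$ is a locally constant sheaf of $\Q$-algebras on $C^t_{F,m}$, the formation of $\ker(p)$ commutes with the étale-local trivialisations, and the quotient $\Lambda_F|_{C^t_{F,m}}$ is locally constant with fibre $\Q(\zeta_m)$. Taking the (disjoint) union over $m$ shows $\Lambda_F$ is locally constant on all of $C^t_F$.

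The main obstacle is the verification that the local identifications of $\Q[\chi]$ with $\underline{\Q[T]/(T^m-1)}$ patch so that $\ker(p)$ glues to a genuine ideal subsheaf $\sI$ rather than merely an étale-local one — in other words, that the projection $p$ is canonical up to the action of $\Gal(\Q(\zeta_m)/\Q)$ and that this ambiguity is absorbed by the transition functions of $\chi$. Concretely: two local trivialisations of $\chi$ differ by an automorphism of $\Z/m$, i.e. by an element of $(\Z/m)^\times \cong \Gal(\Q(\zeta_m)/\Q)$, and the induced automorphism of $\Q[T]/(T^m-1)$ permutes the various projections to $\Q(\zeta_m)$ transitively; hence $\ker(p)$, while not a distinguished factor, is a well-defined \emph{ideal} (the unique ideal with quotient the cyclotomic field of the correct conductor, once one also kills the proper-subcyclotomic factors — this is precisely the point of using the minimal polynomial $\psi_m$ rather than $T^m-1$). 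I would spell this out as the one genuine computation in the proof; the remaining assertions are then formal consequences of "locally constant sheaf modulo a locally constant ideal subsheaf is locally constant."
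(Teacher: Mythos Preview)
Your proposal is correct and follows the same route as the paper: the paper in fact gives no separate proof, stating the lemma as the culmination of the preceding construction, and your argument simply spells out what is implicit there---the local trivialisation of $\Q[\chi]$, the identification of the local quotient with $\Q(\zeta_m)$, and the observation that the ideal $\ker(p)=(\psi_m)$ is stable under $(\Z/m)^\times\cong\Aut(\Z/m)$ so that it glues. Your additional care about the stratification by $m$ and the Galois-equivariance of $\ker(p)$ is exactly the verification the paper elides, so there is nothing to correct.
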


Let $\DAs{C^t_F,\Q}{\acute{e}t}$ (respectively $\DAs{C^t_F,\Z}{\acute{e}t}$) be the triangulated category of motives without transfers in the \'{e}tale topology (see \cite{ayoub-hopf}). %Recall that this is the category of $\Q$-vector spaces on $C^t_F$ in the \'{e}tale topology localised with respect to maps $X\times \A^1\rightarrow X$ for all smooth schemes $X$ over $C^t_F$. By \cite{ayoub-hopf}, this is the same as the category $\DMs{C^t_F,\Q}{et}$.

\begin{lemma}
$\Lambda_F$ is a dualisable object in $\DAs{C^t_F,\Q}{\acute{e}t}$ (Also true for $\Lambda_{F, \Z}$).
\end{lemma}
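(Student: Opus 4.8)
The plan is to reduce the dualisability of $\Lambda_F$ to a purely local statement and then invoke the fact that $\DMs{C^t_F,\Q}{et}$ satisfies étale descent, so that a locally dualisable object with dualisable transition data is dualisable. First I would recall from the previous lemma that $\Lambda_F$ is an étale-locally constant sheaf: there is an étale cover $\{V_i \to C^t_F\}$ on which $\Lambda_F|_{V_i}$ is the constant sheaf associated to the $\Q$-algebra $\Q(\zeta_{m_i})$, a finite-dimensional $\Q$-vector space. Over each $V_i$, the constant sheaf $\underline{\Q(\zeta_{m_i})}$ is a finite direct sum of copies of the unit object $\mathbf{1}$, hence is a compact, dualisable object of $DA^{et}(V_i,\Q)$, with dual again $\underline{\Q(\zeta_{m_i})}^{\vee} \cong \underline{\Q(\zeta_{m_i})}$ (the trace form on the cyclotomic field gives the evaluation and coevaluation explicitly, but one does not even need this: any finite sum of units is self-dual).

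The key point is then that $DA^{et}(-,\Q)$ — equivalently $\DMs{-,\Q}{et}$ — satisfies étale (in fact étale-hyperdescent) as a functor of the base, by \cite{ayoub-hopf}. Consequently the full subcategory of dualisable (equivalently, compact) objects forms an étale stack of symmetric monoidal categories, and to check that $\Lambda_F$ is dualisable it suffices to check this after pullback along the cover $\{V_i \to C^t_F\}$, which we have just done. More concretely, one builds the dual $\Lambda_F^{\vee}$ and the evaluation/coevaluation maps by descent: the local duals $\underline{\Q(\zeta_{m_i})}^{\vee}$ together with the canonical descent data on $\Lambda_F$ (it is a sheaf, so it has genuine, not merely homotopy-coherent, gluing) assemble to a global object $\Lambda_F^{\vee} \in DA^{et}(C^t_F,\Q)$, and the local triangle identities glue because they hold on each $V_i$ and on the overlaps $V_i \times_{C^t_F} V_j$. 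Hence $\Lambda_F$ is rigid. The argument for $\Lambda_{F,\Z}$ is identical, replacing $\Q(\zeta_m)$ by $\Z[\zeta_m]$, which as a $\Z$-module is free of finite rank, hence $\underline{\Z[\zeta_m]}$ is a finite sum of copies of $\mathbf{1}$ in $DA^{et}(-,\Z)$ and therefore dualisable.

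I expect the main obstacle to be purely bookkeeping: making precise that ``dualisable objects satisfy étale descent'' in the $\infty$-categorical / model-categorical setting of \cite{ayoub-hopf}, i.e. that the symmetric monoidal $\infty$-category $DA^{et}(-,\Q)$ has a well-behaved descent theory and that the dualisability condition is local for it. Once that descent formalism is in place — or alternatively, if one prefers to avoid it, once one observes that $\Lambda_F$ is an étale $\Q(\zeta_m)$-local system and that such local systems of finite-dimensional algebras are compact generators' worth of data, concretely dualisable — the proof is immediate from the local computation. A slightly more hands-on alternative, avoiding any abstract descent machinery, would be: choose a finite étale (even Galois, after refining) surjection $V \to C^t_F$ trivialising $\Lambda_F$, write $\Lambda_F$ as a direct summand of $\pi_* \pi^* \Lambda_F$ where $\pi \colon V \to C^t_F$ (using that $\pi$ is finite étale and $\Q$-coefficients, so the trace splits the unit), note $\pi^*\Lambda_F$ is a finite sum of units hence dualisable, and conclude since $\pi_*$ of a dualisable object along a finite étale map is dualisable and dualisable objects are closed under direct summands. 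This second route is probably the cleanest to write down.
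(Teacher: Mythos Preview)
Your proposal is correct and follows essentially the same line as the paper's proof: reduce to an \'etale cover on which $\Lambda_F$ becomes a constant sheaf with value a finite free module, observe that such a sheaf is a finite sum of copies of the unit and hence dualisable, and conclude by \'etale descent. The paper's argument is a terse two-sentence version of exactly this, while you have spelled out the descent step (and offered a second, more explicit route via the splitting $\Lambda_F \hookrightarrow \pi_*\pi^*\Lambda_F$ along a trivialising cover), but the underlying idea is the same.
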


\begin{proof}
	By the previous lemma, $\Lambda_F$ is locally constant. Thus, there is an \'{e}tale covering $p: U\rightarrow C^t_F$ such $\Lambda_F$ is free of finite rank. Then by \'{e}tale descent, we get the required result.
	% Then [Cisinski-Deglise \S 4.2] $f_\#: SH(U)\rightarrow SH(k)$ preserves dualisable objects. [Cor B2 Levine, Yang Zhao Riou].
\end{proof}

\begin{definition} (New higher Chow groups and motivic cohomology) We define the $\Lambda$-higher Chow groups as
\[CH^j(C^t_F, i)_{\Lambda_F}:= Hom_{C^t_F}(\mathbf{1}, \Lambda_F(j)[2j-i])\]
\end{definition}

\begin{proposition}
The $\Lambda$-higher Chow groups defined above are representable in $\DAs{k,\Q}{\acute{e}t}$. That is, there exists a motive $N\in \DAs{k,\Q}{\acute{e}t}$ such that $Hom_{\DAs{k,\Q}{\acute{e}t}}(N, \Q(j)[2j-i])\simeq CH^j(C^t_F,i)_{\Lambda_F}$.
\end{proposition}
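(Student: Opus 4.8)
The plan is to push the motive $\Lambda_F$ on $C^t_F$ forward to $\Spec k$ along the structure morphism and verify that the resulting object represents the $\Lambda$-higher Chow groups. Write $g\colon C^t_F \to \Spec k$ for the structure morphism (factoring as $C^t_F \xrightarrow{\pi} F \to \Spec k$, though we only need the composite). Since $F$ is a tame Deligne–Mumford stack of finite type over $k$, so is $C^t_F$, and hence $g$ is a proper-enough (or at least finite-type, separated) morphism for which the functor $Rg_*$ on $DA^{et}(-,\Q)$ exists and admits a left adjoint $Lg^*$; this is exactly the adjunction $(Lg^*, Rg_*)$ constructed in the Remark following Lemma \ref{extension} (via the equivalence $DA^{et}(\sX,\Q)\simeq DA^{et}(Smrep/\sX,\Q)$). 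I would therefore \emph{define} $N := Rg_*\,\Lambda_F \in DA^{et}(k,\Q)$.

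The core computation is the chain of natural isomorphisms, for all $i,j$,
\[
\Hom_{DA^{et}(k,\Q)}\bigl(Rg_*\Lambda_F,\ \Q(j)[2j-i]\bigr)
\;\cong\;
\Hom_{DA^{et}(k,\Q)}\bigl(\Lambda_F^\vee,\ \Q(j)[2j-i]\bigr),
\]
obtained by first using that $\Lambda_F$ is dualisable in $DA^{et}(C^t_F,\Q)$ (the previous lemma), so $Rg_*\Lambda_F$ is dualisable in $DA^{et}(k,\Q)$ with dual $Lg_!\bigl((\Lambda_F)^\vee\bigr)$ or, more elementarily, just dualise inside the rigid subcategory and use $\Hom(A,B)\cong\Hom(\mathbf 1, A^\vee\otimes B)$ together with the projection formula $Rg_*(\Lambda_F)\otimes \Q(j)[2j-i]\cong Rg_*(\Lambda_F\otimes Lg^*\Q(j)[2j-i]) = Rg_*(\Lambda_F(j)[2j-i])$ and the $(Lg^*,Rg_*)$-adjunction:
\[
\Hom_{DA^{et}(k,\Q)}\bigl(Rg_*\Lambda_F,\Q(j)[2j-i]\bigr)
\cong
\Hom_{DA^{et}(k,\Q)}\bigl(\mathbf 1, Rg_*(\Lambda_F^\vee(j)[2j-i])\bigr)
\cong
\Hom_{DA^{et}(C^t_F,\Q)}\bigl(\mathbf 1, \Lambda_F^\vee(j)[2j-i]\bigr).
\]
Finally one replaces $\Lambda_F^\vee$ by $\Lambda_F$: since $\Lambda_F$ is étale-locally the constant sheaf $\Q(\zeta_m)$, which is a finite étale $\Q$-algebra and in particular self-dual as a $\Q$-vector space with its trace form, the canonical trace pairing $\Lambda_F\otimes\Lambda_F\to\mathbf 1$ glues to an isomorphism $\Lambda_F\xrightarrow{\sim}\Lambda_F^\vee$ in $DA^{et}(C^t_F,\Q)$; under this identification the right-hand side becomes $\Hom_{C^t_F}(\mathbf 1,\Lambda_F(j)[2j-i]) = CH^j(C^t_F,i)_{\Lambda_F}$ by definition. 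Naturality in $(i,j)$ is automatic since every isomorphism used is induced by $\otimes$, adjunction units/counits, and a fixed duality datum.

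The step I expect to be the main obstacle is not any single isomorphism above but the foundational input that makes them legitimate for stacks: namely that $DA^{et}(C^t_F,\Q)$ has the full six-functor / rigidity formalism one needs — existence of $Rg_*$ with a projection formula and the fact that dualisable objects push forward to dualisable objects along $g$ — for $C^t_F$ a (non-separated in general) tame Deligne–Mumford stack. For quotient-type DM stacks this is available in the literature on étale motives over stacks (Ayoub–Hopf style, resp. the cited works), and $C^t_F$ is built from $F$ by taking a relative scheme of cyclic subgroups, so it is again a tame DM stack of finite type; but one must be slightly careful that the projection formula holds without properness, which is where the dualisability of $\Lambda_F$ is doing real work — it lets us trade $Rg_*$ against the (always-available) $Lg_!$ or simply stay inside the rigid subcategory where $Rg_* = Lg_!$ up to a Tate twist by the relative dimension, which here is $0$ since $g$ is representable-by-finite over $F$ and $F$ is a DM stack with affine diagonal of dimension $\dim F$ — so more honestly, $N := Rg_*\Lambda_F$ with $g$ the composite to $\Spec k$ may need the relative dimension of $F/k$ absorbed, and I would track that twist explicitly rather than claim it vanishes.
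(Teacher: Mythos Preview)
Your candidate motive $N = Rg_*\Lambda_F$ and the displayed chain
\[
\Hom_{k}\bigl(Rg_*\Lambda_F,\ \Q(j)[2j-i]\bigr)\;\cong\;\Hom_{k}\bigl(\mathbf 1,\ Rg_*(\Lambda_F^\vee(j)[2j-i])\bigr)
\]
contain a genuine gap. The first isomorphism tacitly uses $(Rg_*\Lambda_F)^\vee \cong Rg_*(\Lambda_F^\vee)$, and this is false in general: already for $F=\G_m$ (so $C^t_F=\G_m$, $\Lambda_F=\Q$) one has $Rg_*\Q \simeq \Q\oplus\Q(-1)[-1]$, whose dual is $\Q\oplus\Q(1)[1]\not\simeq Rg_*\Q$. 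The variant you mention, $(Rg_*\Lambda_F)^\vee \simeq Lg_!(\Lambda_F^\vee)$, is also not correct as stated: Verdier-type duality interchanges $g_*$ and $g_!$ only after twisting by the relative dualising object $g^!\mathbf 1$, so what you would actually get is $(Lg_!\Lambda_F^\vee)^\vee \simeq Rg_*(\Lambda_F\otimes g^!\mathbf 1)$. You sense this at the end (``may need the relative dimension absorbed''), but that twist is exactly what breaks the identification with $CH^j(C^t_F,i)_{\Lambda_F}$: after inserting $g^!\mathbf 1$ you are no longer computing $\Hom_{C^t_F}(\mathbf 1,\Lambda_F(j)[2j-i])$. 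In short, neither the dualisability of $Rg_*\Lambda_F$ nor the claimed identification of its dual is available without properness, and the projection formula for $Rg_*$ does not by itself bridge the gap.

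The paper sidesteps all of this by using the \emph{other} adjunction. Since $C^t_F$ is smooth over $k$, the structure map $f$ admits a left adjoint $f_\#$ to $f^*$, and one simply writes
\[
\Hom_{C^t_F}(\mathbf 1,\Lambda_F(j)[2j-i])\;=\;\Hom_{C^t_F}(\Lambda_F^\vee,\,f^*\mathbf 1(j)[2j-i])\;=\;\Hom_{k}\bigl(f_\#(\mathbf 1_{C^t_F}\otimes\Lambda_F^\vee),\,\mathbf 1(j)[2j-i]\bigr),
\]
so $N=f_\#(\mathbf 1_{C^t_F}\otimes\Lambda_F^\vee)$. Only the dualisability of $\Lambda_F$ (to pass it across the $\Hom$) and the smooth adjunction $(f_\#,f^*)$ are used; no six-functor subtleties, no $g^!$, no properness, no self-duality of $\Lambda_F$ via trace, and no question of whether a pushforward of a dualisable object is dualisable. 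Your instinct to move $\Lambda_F$ to the source side is right; the point is to do it \emph{before} pushing forward, and then push forward with $f_\#$ rather than $Rg_*$.
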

\begin{proof}
	As $\Lambda_F$ is a dualisable object in $\DAs{C^t_F,\Q}{\acute{e}t}$, we have that
	\begin{align*}
	Hom_{C^t_F}(\mathbf{1}_{C^t_F}, \Lambda_F(j)[2j-i]) &= Hom_{C^t_F}(\mathbf{1}_{C^t_F}\otimes \Lambda_F^\vee, \mathbf{1}(j)[2j-i])\\
	&= Hom_{k}(f_\#(\mathbf{1}_{C^t_F}\otimes \Lambda_F^\vee), \mathbf{1}(j)[2j-i])
	\end{align*}
	where $f: C^t_F\rightarrow \Spec k$ is the structure map. Thus, the $\Lambda$-motivic cohomology of $C^t_F$ is represented by the motive $f_\#(\mathbf{1}_{C^t_F}\otimes \Lambda_F^\vee)$ in $\DAs{k,\Q}{\acute{e}t}$
\end{proof}

\begin{remark}
	The above proposition is also true with $\Z$ coefficients in $\DAs{k,\Z}{\acute{e}t}$ and we get an integral \'{e}tale motive.
\end{remark}

Let $F$ be a Deligne-Mumford stack. The motive $M(F)$ as described in \cite{MotivesDM, cdh} is ``too small" for computing its $K$-Theory. To get a reasonable comparison with $K$-Theory, we need to ``enlarge" the motive of $F$. %Note that since $\Lambda_F$ is a locally constant sheaf of $\Q$-algebras it is dualisable in $SH_\Q(C^t_F)$.
We make the following definition.

\begin{definition}
	Let $F$ be a Deligne-Mumford stack and consider the associated Character stack $f: C^t_F\rightarrow \Spec k$. We define the motive 
	\[M_\chi(F):= M( f_\# (\mathbf{1}_{C^t_F}\otimes \Lambda_F^\vee))\]
	We denote the associated motivic cohomology by
	\[H_\chi^{p,q}(F, \Q):=Hom\big(M_\chi(F), \Q(q)[p]\big)\]
\end{definition}

\begin{proposition}[Basic functoriality]
\begin{enumerate}
\item Let $f : F \to F'$ be a morphism of smooth Deligne-Mumford stacks oevr $k$. Then $f$ induces a natural morphism  $M_{\chi}(f) : M_\chi(F) \to M_\chi(F')$ such that we get a functor $M_{\chi} : SmDM/k \to 
\DM{k,\Q}{}$ sending $F \mapsto  M_\chi(F)$ and $f : F \to F' \mapsto M_{\chi}(f)$. 

\item Let $ f : Spec(L) \to Spec(k)$ be a separable field extension and let $F$ be smooth DM stack over $k$. Then $f^*(M_\chi(F)) \cong M_\chi(F \times_k L)$.
\end{enumerate}
\end{proposition}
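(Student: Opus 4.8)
The plan is to establish the two assertions separately, both by reducing to the functoriality of the constructions $C^t_{(-)}$, $\Lambda_{(-)}$, $f_\#$, and the motive functor $M(-)$.

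\textbf{Part (1).} First I would observe that $F \mapsto C^t_F$ is functorial: a morphism $f : F \to F'$ of Deligne-Mumford stacks induces a morphism $C^t(f) : C^t_F \to C^t_{F'}$ sending $(u, C)$ to $(f \circ u, f_*(C))$, where $f_*(C) \subseteq \Aut(f \circ u)$ is the image of $C$ under the homomorphism $\Aut(u) \to \Aut(f \circ u)$; compatibility with isomorphisms (conjugation) is immediate. Next, I would compare the character sheaves: there is a natural map $C^t(f)^* \chi_{F'} \to \chi_F$ of sheaves on $(Sch/C^t_F)_{\text{\'et}}$ induced by restriction of characters along $C \to f_*(C)$, hence a map $C^t(f)^* \Q[\chi_{F'}] \to \Q[\chi_F]$ of group-algebra sheaves compatible with the ideal sheaves $\sI$ (since these are defined locally via the same cyclotomic quotients), giving $C^t(f)^* \Lambda_{F'} \to \Lambda_F$. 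Dualising (both are dualisable by the earlier lemma) yields $C^t(f)^* \Lambda_{F'}^\vee \to \Lambda_F^\vee$ or its dual direction — I would check which variance is needed, and it should be $\Lambda_F^\vee \to C^t(f)^* \Lambda_{F'}^\vee$ after dualising. Then applying $(C^t(f))_\#$ (or adjunction with $(C^t(f))^*$) and pushing forward along the structure maps to $\Spec k$, together with the unit/counit of the $(g_\#, g^*)$ adjunction, produces a morphism $f_\#(\mathbf{1}_{C^t_F} \otimes \Lambda_F^\vee) \to f'_\#(\mathbf{1}_{C^t_{F'}} \otimes \Lambda_{F'}^\vee)$ in $DA^{et}(k,\Q)$. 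Finally, applying the motive realisation functor $M : DA^{et}(k,\Q) \to \DM{k,\Q}{}$ gives $M_\chi(f)$. Functoriality (identities to identities, composites to composites) then follows by chasing the functoriality of each of the four constructions in turn; I would state this as a diagram chase rather than write it out.

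\textbf{Part (2).} For the base change statement, let $g : \Spec L \to \Spec k$ and write $F_L := F \times_k L$. I would first note the canonical identification $C^t_{F_L} \cong C^t_F \times_k L$, which is formal from the definition of $C^t$ since forming automorphism group sheaves and cyclic subgroups commutes with the base change $U \mapsto U$ viewed over $L$ versus over $k$. Under this identification, the character sheaf $\chi_{F_L}$ is the pullback of $\chi_F$ along the projection $C^t_{F_L} \to C^t_F$, and hence $\Lambda_{F_L}$ is the pullback of $\Lambda_F$ (the ideal sheaf $\sI$ being defined \'etale-locally and compatibly with pullback). Then I would invoke the smooth (here, base change) compatibility of $(-)_\#$ with pullback: for the Cartesian square relating $C^t_{F_L} \to \Spec L$ and $C^t_F \to \Spec k$, one has $g^* f_\#(-) \cong (f_L)_\# \bar g^*(-)$, so $g^*\big(f_\#(\mathbf{1}_{C^t_F} \otimes \Lambda_F^\vee)\big) \cong (f_L)_\#\big(\mathbf{1}_{C^t_{F_L}} \otimes \Lambda_{F_L}^\vee\big)$. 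Applying $M$ and using that $M$ commutes with $g^*$ (both are induced by pullback of presheaves) gives the claim.

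The main obstacle I expect is Part (1): verifying that the map on character sheaves $C^t(f)^*\Lambda_{F'} \to \Lambda_F$ genuinely descends through the ideal sheaves $\sI$ and respects the local cyclotomic quotient structure, and then correctly tracking the variance when dualising and pushing forward so that the arrow points in the stated direction $M_\chi(F) \to M_\chi(F')$. In particular one must be careful that $f$ need not be stabiliser-preserving, so the homomorphism $C \to f_*(C)$ of cyclic groups can have a kernel; this is exactly why one works with $\Lambda$ (the cyclotomic quotient) rather than the full group algebra, and one should check that the induced map on $\Lambda$-sheaves is still well-defined in this generality. The smooth base-change isomorphism for $(-)_\#$ in Part (2) is standard in the six-functor formalism for $DA^{et}$ (cf. \cite{ayoub-hopf}) and should be quoted rather than reproved.
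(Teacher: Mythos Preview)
Your proposal is correct and follows essentially the same route as the paper: in Part (1) the paper also builds $Cf:C^t_F\to C^t_{F'}$, obtains $res_f:Cf^*\Lambda_{F'}\to\Lambda_F$ by restriction of characters, dualises (using that $Cf^*$ is monoidal and $\Lambda_{F'}$ strongly dualisable) to get $\theta:\Lambda_F^\vee\to Cf^*\Lambda_{F'}^\vee$, and then uses the unit of $(p_{F'})_\sharp\dashv p_{F'}^*$ together with the adjunction $(p_F)_\sharp\dashv p_F^*$ to produce the map on pushforwards; Part (2) is likewise handled via $C^t_{F_L}\cong C^t_F\times_k L$, pullback compatibility of $\Lambda$, and the smooth base-change isomorphism $p'_\sharp\tilde f^*\simeq f^*p_\sharp$.

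One small caution: in Part (1) you mention ``applying $(C^t(f))_\#$'', but $Cf$ need not be smooth, so this left adjoint is not available; the paper avoids this entirely by only ever using the adjunctions for the smooth structure maps $p_F,p_{F'}:C^t_{(-)}\to\Spec k$ and the identification $Cf^*p_{F'}^*\cong p_F^*$. Your parenthetical ``(or adjunction with $(C^t(f))^*$)'' is the right instinct, but the clean way is exactly the paper's: pull the unit $\Lambda_{F'}^\vee\to p_{F'}^*(p_{F'})_\sharp\Lambda_{F'}^\vee$ back along $Cf^*$, precompose with $\theta$, and then adjoin over $p_F$.
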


\begin{proof}

(1)
Let $Cf : C^t_F \to C^t_{F'}$ be the induced map. This map is given in the following way :
 for any $k$-scheme $X$, $x \in F(X)$ and let $c \subset Aut(x)$ be a cyclic subgroup scheme over $X$, $Cf(x,c) = (f(x), f(c))$. Consider the inverse image  functor $Cf^* : PSh(Sm/C^t_{F'}) \to PSh(Sm/C^t_{F})$. For any persheaf $G \in  PSh(Sm/C^t_{F'})$ and $U \in Sm/C^t_{F}$ we have 
$$Cf^* (G)(U) = colim_{U \to V \times_{C^t_{F'}} C^t_F \in Smrep/{C^t_F}, V \in Sm/{C^t_{F'}}} G(V).$$  Note that for any $V \in Sm/C^t_{F'}$, the presheaf $Cf^*(V)$ is the sheaf represented by the object  $V \times_{C^t_{F'}} C^t_F \to C^t_F$. Restriction of characteres thus gives a morphism $res_f : Cf^*(\chi_{F'}) \to \chi_{F}$, which induces a morphism of presheaf of algebras $res_f : Cf^*(\Lambda_{F'}) \to \Lambda_F$. As $Cf^*$ is monoidal we get a morphism
$$t : Cf^*(\Lambda_{F'}^\vee) \to Cf^*(\Lambda_{F'})^\vee.$$ This morphism is invertible as $\Lambda_{F'}$ is strongly dualisable. Thus we get a morphism $\theta := t^{-1} \circ res_f^\vee : \Lambda_F^\vee \to Cf^*(\Lambda_{F'}^\vee)$. Let $p : C^t_F \to Spec(k)$ (resp. $p' : C^t_{F'} \to Spec(k)$) denote the structure map. Then $p'^*$ has a left adjoint $p'_{\sharp}$. Thus we get a canonical map $\Lambda_{F'}^\vee \to p'^* p'_{\sharp}(\Lambda_{F'}^\vee)$. Composing with with $\theta$ we get
$$ \Lambda_F^\vee \to Cf^*  p'^* p'_{\sharp}(\Lambda_{F'}^\vee) \cong p^* p'_{\sharp}(\Lambda_{F'}^\vee).$$ By adjunction, this gives us
$$p_{\sharp}(\Lambda_F^\vee) \to p'_{\sharp}(\Lambda_{F'}^\vee).$$ This is the required morphism $M_{\chi}(f) : M_\chi(F) \to M_\chi(F')$. \\

(2)
 As $f$ is smooth, $f^*(A)$ is just restriction of any presheaf $A$ to $Sm/L$. Therefore, it is easy to verify $$C^t_{f^*F} \cong  C^t_{F \times_k L} \cong C^t_F \times_k L \cong f^*(C^t_F)$$  as stacks on $Sm/L$.  Let $\tilde{f} : C^t_F \times_k L \to C^t_F$ be the canonical morphism which gives us the following cartesian diagram of stacks
\begin{center}
		\begin{tikzcd}
		 C^t_{F \times_k L}\arrow[r," p'"]\arrow[d, "\tilde{f}"] & Spec(L)\arrow[d, "f"]\\
		C^t_F \arrow[r, "p"] & Spec(k)
		\end{tikzcd}
	\end{center}

Note that $\tilde{f}$ is smooth and $$\tilde{f}^*(\Lambda_F) \cong \Lambda_{f^*F}$$ and therefore $$\tilde{f}^*(\Lambda_{F}^\vee) \cong \Lambda_{f^*F}^\vee.$$ The counit  $\tilde{f}_{\sharp} \tilde{f}^* \to Id$ of the adjunction $(\tilde{f}_{\sharp},  \tilde{f}^*) $ induces the following morphism $f_{\sharp} p'_{\sharp} \tilde{f}^* \cong p_{\sharp}\tilde{f}_{\sharp} \tilde{f}^* \to p_{\sharp}$. This by adjunction $(f_{\sharp}, f^*)$, induces a morphism  $  p'_{\sharp} \tilde{f}^* \to f^* p_{\sharp}$. Both the functors preserves colimit and evaluating on $U \in Sm/ C^t_F$, we get $  p'_{\sharp} \tilde{f}^* \to f^* p_{\sharp}$ is invertible. Therefore, $$f^* p_{\sharp}(\Lambda_{F}^\vee) \cong   p'_{\sharp} \tilde{f}^*(\Lambda_{F}^\vee) \cong  p'_{\sharp}(\Lambda_{f^*F}^\vee).$$ This is our required isomorphism. 
\end{proof}

\begin{proposition}
	There exists a decomposition $M_\chi(F)= M(F)\oplus M_{\chi \neq 1}(F)$. %for some $G\in DA^{et}(k,\Q)$
	This induces the projection map $H_\chi^{p,q}(F,\Q)\rightarrow H^{p,q}(F,\Q)$.
\end{proposition}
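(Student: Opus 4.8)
The plan is to separate, inside the stack $C^t_F$, the open and closed substack on which the universal cyclic subgroup is trivial: this substack is canonically a copy of $F$, and on it $\Lambda_F$ restricts to the unit object, so that it contributes a direct summand equal to $M(F)$.

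First I would exhibit the splitting of the source stack. Sending $u\in F(U)$ to the pair $(u,1)$, with $1\subset\Aut(u)$ the trivial cyclic subgroup scheme, defines a section $e\colon F\to C^t_F$ of $\pi$. Since $F$ is tame, the automorphism group schemes, hence the cyclic subgroup schemes $C\subset\Aut(u)$ occurring in $C^t_F$, are finite \'etale, so the function $(u,C)\mapsto|C|$ is locally constant on $C^t_F$; consequently the substack $C^t_{F,1}$ where $|C|=1$ is open and closed, with open and closed complement $C^t_{F,\neq1}$. The only cyclic subgroup of order one is the trivial one, and the requirement that an isomorphism conjugate the trivial subgroup to itself is vacuous, so $e$ is an equivalence onto $C^t_{F,1}$; moreover it is compatible with the maps to $\Spec k$, so that $f\circ e=g$, where $g\colon F\to\Spec k$ and $f\colon C^t_F\to\Spec k$ are the structure maps. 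Writing $j_1\colon C^t_{F,1}\hookrightarrow C^t_F$ and $j_{\neq1}\colon C^t_{F,\neq1}\hookrightarrow C^t_F$ for the two open-and-closed immersions, we obtain a canonical isomorphism $\mathbf{1}_{C^t_F}\cong (j_1)_\#\mathbf{1}_{C^t_{F,1}}\oplus(j_{\neq1})_\#\mathbf{1}_{C^t_{F,\neq1}}$ in $DA^{et}(C^t_F,\Q)$.

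Next I would compute $\Lambda_F$ on the two pieces. Along $e$ the character sheaf $\chi$ becomes $\Hom_{Gps}(1,\G_m)=0$, so $\Q[\chi]$ restricts to the constant sheaf $\Q$; as $|C|=1$ here, the ideal sheaf $\sI$ restricts to $0$, and hence $\Lambda_F$ restricts to $\mathbf{1}$ on $C^t_{F,1}$. Since $\Lambda_F$ is dualisable (being locally constant), $\Lambda_F^\vee$ likewise restricts to $\mathbf{1}$, i.e.\ $j_1^*\Lambda_F^\vee\cong\mathbf{1}_{C^t_{F,1}}$. Now I tensor the splitting of $\mathbf{1}_{C^t_F}$ with $\Lambda_F^\vee$, apply the projection formula for the open immersions $j_1$ and $j_{\neq1}$, and then apply $f_\#$, which commutes with finite direct sums; using $f\circ j_1=g$ together with $j_1^*\Lambda_F^\vee\cong\mathbf{1}$, this gives
\[
f_\#\big(\mathbf{1}_{C^t_F}\otimes\Lambda_F^\vee\big)\ \cong\ g_\#\mathbf{1}_F\ \oplus\ (f\circ j_{\neq1})_\#\big(\mathbf{1}_{C^t_{F,\neq1}}\otimes j_{\neq1}^*\Lambda_F^\vee\big).
\]
Applying the functor $M$ and recalling that $M(F)=M(g_\#\mathbf{1}_F)$ by the definition of the motive of a Deligne-Mumford stack in \cite{MotivesDM, cdh}, we obtain
\[
M_\chi(F)\ =\ M(F)\ \oplus\ M_{\chi\neq1}(F),\qquad M_{\chi\neq1}(F):=M\big((f\circ j_{\neq1})_\#\big(\mathbf{1}_{C^t_{F,\neq1}}\otimes j_{\neq1}^*\Lambda_F^\vee\big)\big).
\]
Finally, applying $\Hom(-,\Q(q)[p])$ to the inclusion of the first summand $M(F)\hookrightarrow M_\chi(F)$ yields the asserted projection $H^{p,q}_\chi(F,\Q)\to H^{p,q}(F,\Q)$, which is split by the complementary projection $M_\chi(F)\twoheadrightarrow M(F)$.

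I expect the only substantive point to be the first step: checking that $C^t_{F,1}$ is open and closed in $C^t_F$ and that $e$ is an equivalence onto it — this rests on tameness, so that the order of the cyclic subgroup scheme is locally constant — together with matching $g_\#\mathbf{1}_F$ with the motive $M(F)$ of \cite{MotivesDM, cdh}. Everything after that (the decomposition $\mathbf{1}_{C^t_F}=\bigoplus_i (j_i)_\#\mathbf{1}$, the projection formula, and the compatibility of $f_\#$ and $M$ with finite direct sums) is formal.
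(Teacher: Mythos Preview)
Your proof is correct and follows the same strategy as the paper: the section $e\colon F\to C^t_F$ by the trivial cyclic subgroup identifies $F$ with an open and closed component on which $\Lambda_F$ restricts to $\Q$, and applying $f_\#$ to the resulting decomposition of $\mathbf{1}_{C^t_F}\otimes\Lambda_F^\vee$ gives the splitting. The paper's proof is a terse four lines sketching exactly these points; you have simply filled in the details (local constancy of $|C|$ from tameness, the projection formula for the open-and-closed immersions, and the identification $g_\#\mathbf{1}_F\simeq M(F)$).
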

\begin{proof}
	The sturcture map $C^t_F\rightarrow F$ admits a section $e: F\rightarrow C^t_F$ given by the identity element. $e(F)$ defines a connected component $C^t_F$. Moreover, $e^*(\Lambda_F)= \Q$. Thus, we have a decomposition $\mathbf{1}_{C^t_F}\otimes \Lambda_F^\vee= (e_{\sharp}(\mathbf{1}_F)\otimes \Q)\oplus \mathcal{G}$. Applying $f_\#$, gives us the required result.
\end{proof}

The following theorem show that the new motive differs from the motive of inertia stack upto ``adding all roots of unity". Thus, the previous theorem recovers existing Riemann-Roch formalisms in Krishna-Sreedhar \cite{krishnasreedhar}, Edidin-Graham \cite{edidingrahamriemannroch}, Toen \cite{toenthesis,Toen}, etc.

\begin{theorem}\label{comp inertia}
	Let $k=\bar{k}$.
	\[M_\chi(F)\otimes \Q(\mu_\infty)\simeq M(I_F)\otimes \Q(\mu_\infty)\]
\end{theorem}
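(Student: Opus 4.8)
The plan is to reduce the statement to an isomorphism of locally constant sheaves on $C^t_F$ and then push it forward along the structure map $f\colon C^t_F\to\Spec k$.

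First I would set up both sides uniformly. The stack $I_F$ sits over $C^t_F$ as the locus of \emph{generators} of the tautological cyclic subgroup: sending an object $(u\colon U\to F,\ g\in\Aut(u))$ of $I_F$ to $(u,\langle g\rangle)$ defines a morphism $q\colon I_F\to C^t_F$ whose fibre over $(u,C)$ is the finite reduced scheme of generators of $C$. Since $F$ — hence $I_F$ and $C^t_F$ — is smooth, $q$ is finite, unramified, and flat by miracle flatness, hence finite étale. Therefore $p_{I_F}=f\circ q$, and using pseudo-functoriality of $(-)_\#$ together with the ambidexterity identification $q_\#\simeq q_*$ for the finite étale $q$, we get $M(I_F)\cong f_\# q_*\mathbf{1}_{I_F}$ in $\DM{k,\Q}{}$. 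On the other hand $M_\chi(F)$ is the image in $\DM{k,\Q}{}$ of $f_\#\Lambda_F^\vee$, and $\Lambda_F$ is already self-dual: étale-locally it is the separable $\Q$-algebra $\Q(\zeta_m)$, so the trace pairing $\Lambda_F\otimes\Lambda_F\to\Lambda_F\xrightarrow{\mathrm{tr}}\mathbf{1}$ is perfect and glues to an isomorphism $\Lambda_F^\vee\cong\Lambda_F$. Finally $f_\#$ commutes with the change-of-coefficients functor $-\otimes\Q(\mu_\infty)$, being a filtered colimit. Putting this together, it suffices to produce an isomorphism
\[
\Lambda_F\otimes\Q(\mu_\infty)\ \simeq\ q_*\mathbf{1}_{I_F}\otimes\Q(\mu_\infty)
\]
in $DA^{et}(C^t_F,\Q(\mu_\infty))\simeq\DMs{C^t_F,\Q(\mu_\infty)}{et}$, and then apply $f_\#$.

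The heart of the proof is this last isomorphism, which is Cartier/Fourier duality for the tautological cyclic group, and is where the hypothesis $k=\bar k$ and the coefficient field $\Q(\mu_\infty)$ enter. Fix once and for all an embedding of the group of roots of unity of $k$ into $\Q(\mu_\infty)^\times$. I would define a morphism of étale sheaves of $\Q(\mu_\infty)$-algebras
\[
\Q[\chi]\otimes\Q(\mu_\infty)\ \longrightarrow\ q_*\underline{\Q(\mu_\infty)}_{I_F}=q_*\mathbf{1}_{I_F}\otimes\Q(\mu_\infty),\qquad \textstyle\sum c_\eta[\eta]\ \longmapsto\ \big(g\mapsto\sum c_\eta\,\eta(g)\big),
\]
where $\eta$ runs over local sections of $\chi=\underline{\Hom}(C,\G_m)$ and $g$ over the generators of $C$ (so $\eta(g)$ is a root of unity, a well-defined element of $\Q(\mu_\infty)$ via the chosen embedding). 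To see that this descends to an isomorphism after quotienting by $\sI$, one checks on geometric stalks, where the map becomes $\Q(\mu_\infty)[\hat{C}]\to\Q(\mu_\infty)^{\mathrm{gen}(C)}$, $[\eta]\mapsto(\eta(g))_g$, for a finite cyclic group $C$ of order $m$: it is surjective because Fourier inversion on $C$ shows $\tfrac1m\sum_{\eta}\overline{\eta(g_0)}[\eta]$ is sent to the indicator of $g_0$; and its kernel is exactly the stalk of $\sI$, by the dimension count $m-\varphi(m)$ on both sides together with the observation that for a \emph{generator} $g_0$ the evaluation $[\eta]\mapsto\eta(g_0)$ has image $\Q(\zeta_{\mathrm{ord}(g_0)})=\Q(\zeta_m)$ and hence factors through $\Q[\hat C]\twoheadrightarrow\Q[\hat C]/\sI_{(u,C)}=\Q(\zeta_m)$, so $\sI_{(u,C)}$ is contained in the kernel. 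Thus the induced map $\Lambda_F\otimes\Q(\mu_\infty)=(\Q[\chi]/\sI)\otimes\Q(\mu_\infty)\to q_*\mathbf 1_{I_F}\otimes\Q(\mu_\infty)$ is an isomorphism, and applying $f_\#$ yields the theorem.

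The step I expect to be the main obstacle is making the third paragraph rigorous at the level of sheaves rather than stalks: one must verify that the comparison map is well defined globally (so that the only choice involved is the single embedding $\mu_\infty(k)\hookrightarrow\Q(\mu_\infty)^\times$), that it respects the algebra structures both before and after quotienting by the étale-locally-defined ideal $\sI$, and that nothing degenerates along the strata of $C^t_F$ where the order $m$ of the cyclic group jumps — here the hypothesis $k=\bar k$ guarantees that the relevant roots of unity are constant, so no monodromy issue arises. Everything else (pseudo-functoriality and ambidexterity of $(-)_\#$, compatibility with $-\otimes\Q(\mu_\infty)$, and the rational equivalence $DA^{et}\simeq\DM{}{}$) is standard and already invoked in the earlier sections.
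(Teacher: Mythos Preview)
Your proposal is correct and follows essentially the same route as the paper: both reduce to the sheaf-level isomorphism $\Lambda_F\otimes\Q(\mu_\infty)\simeq q_*\mathbf{1}_{I_F}\otimes\Q(\mu_\infty)$ on $C^t_F$, constructed via a chosen embedding of roots of unity into $\Q(\mu_\infty)^\times$, and then push forward along $f$. The only cosmetic differences are that you verify the key isomorphism on geometric stalks while the paper passes to an \'etale cover by global quotients $[X/H]$ and computes there, and that you make the self-duality of $\Lambda_F$ and the ambidexterity $q_\#\simeq q_*$ explicit where the paper simply says ``by taking duals.''
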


\begin{proof}
	For any Deligne-Mumford stack $F$, there is canonical map $p:I_F\rightarrow C^t_F$ given by sending any automorphism to the subgroup generated by it. This map is finite \'{e}tale and surjective. We claim that \[p_*p^*(\Q(\mu_\infty))\simeq \Lambda_F \otimes \Q(\mu_\infty).\]
	Then the result will follow by taking duals.
	
	Note that $p^*\Lambda_F$ is a sheaf of $\Q$-algebras on $I_F$.
	Choose an embedding of $\Lambda_F\hookrightarrow \Q(\mu_\infty)$ by choosing primitive roots of units.
	There is a map $p^*(\Lambda_F \otimes \Q(\mu_\infty))\rightarrow p^*\Q(\mu_\infty)=\Q(\mu_\infty)$. By adjunction, this gives us a map, $\Lambda_F \otimes \Q(\mu_\infty))\rightarrow p_*p^*\Q(\mu_\infty)$.
	%The proof is similar as in Toen's thesis. 
	As $F$ is a DM stack it admits an \'{e}tale cover $F'\rightarrow F$ such that $F'$ is a quotient of a finite group $H$ acting on a scheme $X$ (see \cite[Lemma 2.2.3]{stablemaps}. Thus, we are reduced to case when $F=[X/H]$. Consider the map $\tilde{p}:\sqcup_{x\in H} X\rightarrow \sqcup_{\sigma \subset H}X$ where $\sigma$ is a cyclic subgroup of $X$. This is an $H$-equivariant map whose quotient is $p:I_F\rightarrow C^t_F$. Then $p_*(\Q(\mu_\infty))$ on any component is a disjoint union $\sqcup_{<x>=\sigma}\Q(\mu_\infty)$ over all element of $H$ that generate $\sigma$. Using the chosen embedding $\Lambda\hookrightarrow \Q(\mu_\infty)$, we get the required result.
\end{proof}

\begin{remark}
In fact, for a choice of a particular Deligne-Mumford stack $F$, we can replace $\Q(\mu_\infty)$ with $\Q(\mu_n)$ for some large $n$, in the above theorem. The choice of $n$ depends on the stack $F$.
\end{remark}
%\begin{lemma} \label{coefficient change}
%Let $L/\mathbb{Q}$ be a field extension then natural functors $\otimes_{\Q} L : \DM{k,\Q}{} \to \DM{k,L}{}$ and $\otimes_{\Q} L:  \DMs{k,\Q}{} \to  \DMs{k,L}{}$ are conservative.
%\end{lemma}

%\begin{lemma} \label{galois}
% Let $L/k$ be a finite Galois extension. Then the natural base change maps $ \times_k L :  \DM{k,\Q}{} \to \DM{L,\Q}{}$ and $\times_{k} L:  \DMs{k,\Q}{} \to  \DMs{L,\Q}{}$ are conservative.
%\end{lemma}

%\begin{proof}
	
%Follows from Galois descent for motives (see \cite{cd2019}).
%\end{proof}

%\begin{lemma} \label{chow}
%Let  $M \in  \DM{k,\Q}{gm}$ be a geometric motive. Then $M$ is and effective Chow motive iff for all $M' \in  \DM{k,\Q}{gm}$ Chow motive $Ext^1(M',M) =0$ and $Ext^1(M, M') = 0$.
%\end{lemma}

%\begin{proof}
%The proof I have in mind is using Bondarko's weight structure on $\DM{k,\Q}{gm}$ using the fact the motive of smooth projective varieties generate the trinagulated category and they have weight zero. There has to be some result in Bondarkos paper about this result. I have not managed to pinpoint that result yet. If I do not find the result I will write a proof and this becomes a theorem. 

%\end{proof}

\begin{theorem} \label{geometric}
For any smooth separated Deligne-Mumford stack $F$ over a field of characteristic $0$ the motive $M_\chi(F)$ is a geometric motive. If $F$ is moreover proper then $M_\chi(F)$ is a Chow motive.
\end{theorem}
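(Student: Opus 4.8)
The plan is to reduce the statement to known structural results about motives of schemes by using the explicit description $M_\chi(F) = M(f_\#(\mathbf{1}_{C^t_F} \otimes \Lambda_F^\vee))$, where $f : C^t_F \to \Spec k$ is the structure map of the character stack. The first step is to analyze $C^t_F$ itself: since $F$ is a smooth separated Deligne-Mumford stack, so is its inertia stack $I_F$, and $C^t_F$ is obtained from $I_F$ by a finite \'etale quotient (as used in the proof of Theorem \ref{comp inertia}), hence $C^t_F$ is again a smooth separated Deligne-Mumford stack, with a coarse moduli space that is a separated algebraic space (a scheme after an \'etale cover). The key point will be that $f_\#(\mathbf{1}_{C^t_F})$ — that is, the motive $M(C^t_F)$ — is geometric; this is precisely the content of the cited works \cite{MotivesDM, cdh}, where the motive of a smooth separated Deligne-Mumford stack is shown to be a geometric (in fact constructible) motive, using a stratification/dévissage argument reducing to smooth schemes.

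Next I would bring in the twisting sheaf $\Lambda_F^\vee$. By the Lemma above, $\Lambda_F$ (hence $\Lambda_F^\vee$) is a locally constant sheaf on $(Sch/C^t_F)_{\acute et}$ which becomes constant with fibre $\Q(\zeta_m)$ after a finite \'etale cover $U \to C^t_F$. Thus $\Lambda_F^\vee$ is a direct summand of $p_* p^* \mathbf{1}_{C^t_F}$ for a finite \'etale $p$, which is in turn the motive of a finite \'etale scheme-like object over $C^t_F$; more precisely, $f_\#(\mathbf{1}_{C^t_F} \otimes \Lambda_F^\vee)$ is a direct summand of $f_\# p_\# p^* \mathbf{1}_{C^t_F} = (f \circ p)_\#(\mathbf{1}_U) = M(U)$ where $U$ is a smooth separated Deligne-Mumford stack finite \'etale over $C^t_F$ — again geometric by \cite{MotivesDM, cdh}. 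Since geometric motives in $\DM{k,\Q}{}$ form a thick (in particular idempotent-complete, after passing to the pseudo-abelian envelope which is already built into Voevodsky's category) triangulated subcategory, the direct summand $M_\chi(F)$ is geometric. One subtlety: the idempotent cutting out $\Lambda_F^\vee$ a priori lives over $C^t_F$ and is only defined after descent, so I would phrase the summand decomposition directly in $\DM{k,\Q}{}$ using that $f_\#$ is monoidal-compatible and that the projector $\mathbf{1}_{C^t_F}\otimes\Lambda_F^\vee \to p_\#p^*\mathbf{1}_{C^t_F} \to \mathbf{1}_{C^t_F}\otimes\Lambda_F^\vee$ pushes forward to a projector on $M(U)$ whose image is $M_\chi(F)$; invoking Galois/étale descent (Lemma \ref{galois} and the conservativity statements) to ensure the projector is genuinely defined.

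For the properness claim, suppose in addition $F$ is proper. Then $I_F$ is proper, and $C^t_F$ is proper (finite \'etale quotient of a proper stack), and likewise the cover $U$ above can be chosen proper and smooth. The statement then follows from the fact that the motive of a smooth proper Deligne-Mumford stack is a Chow motive: this is again in \cite{MotivesDM, cdh} (or follows from Toën's work \cite{Toen}), the point being that for smooth proper $X$ the geometric motive $M(X)$ lies in the pseudo-abelian image of the category of Chow motives under $\iota$, because one has a duality/Poincaré-type statement and the Hom groups between Tate twists reduce to Chow groups concentrated in the right degree. Since Chow motives (the image of $\iota$) form an idempotent-complete additive subcategory of $\DM{k,\Q}{}$ closed under direct summands and tensor, and $M_\chi(F)$ is a direct summand of the Chow motive $M(U)$, we conclude $M_\chi(F)$ is a Chow motive.

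The main obstacle I anticipate is the careful bookkeeping of \emph{where} the idempotent defining $\Lambda_F^\vee$ lives and showing it descends to an honest projector in $\DM{k,\Q}{}$ (rather than merely étale-locally on $C^t_F$): this requires combining the dualisability of $\Lambda_F$ in $DA^{et}(C^t_F,\Q)$, the identification $DA^{et}(C^t_F,\Q) \simeq \DMs{C^t_F,\Q}{et}$, étale descent, and the fact that $f_\#$ commutes with the relevant colimits — essentially the same package of tools used in the functoriality proposition and in Theorem \ref{comp inertia}. Once the summand decomposition $M(U) = M_\chi(F) \oplus (\text{rest})$ is established in $\DM{k,\Q}{}$, the conclusion is immediate from the corresponding facts for $M(U)$ and the closure properties of geometric (resp. Chow) motives under direct summands.
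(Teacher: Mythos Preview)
Your approach is correct and genuinely different from the paper's. The paper does not trivialize $\Lambda_F$ by a finite \'etale cover of $C^t_F$; instead it invokes Theorem~\ref{comp inertia} (the comparison with the inertia stack after adjoining roots of unity) together with a finite Galois extension of the base field. Concretely: it passes to $L/k$ Galois so that $M_\chi(F_L)$ is a summand of $M_\chi(F_L)\otimes\Q(\mu_n)\simeq M(I_{F_L})\otimes\Q(\mu_n)$, uses that $M(I_{F_L})$ is a summand of $M(X)$ for a smooth (proper) scheme $X$, and then descends using that $M_\chi(F)$ is a summand of $c_\# c^* M_\chi(F)=c_\# M_\chi(F_L)$. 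Your route --- embed $\Lambda_F^\vee$ as a summand of $p_\#\mathbf{1}_U$ for a finite \'etale $p:U\to C^t_F$, then push forward --- avoids both Theorem~\ref{comp inertia} and the base-field gymnastics, at the cost of the bookkeeping you flag about the projector. That bookkeeping is harmless: since $p$ is finite \'etale and we work with $\Q$-coefficients, the unit $\Lambda_F^\vee\to p_\# p^*\Lambda_F^\vee$ is split by the normalized trace, and $p^*\Lambda_F^\vee$ is a constant sheaf, i.e.\ a finite direct sum of copies of $\mathbf{1}_U$; applying $f_\#$ to this splitting (which exists already at the level of sheaves, not merely in the homotopy category) gives the projector on $M(U)^{\oplus N}$ directly in $\DM{k,\Q}{}$. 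The paper's argument is more economical within its own logical structure (it reuses the inertia comparison), while yours is more self-contained and would work even without having established Theorem~\ref{comp inertia}.
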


\begin{proof}

%For a fixed finite type smooth stack $F$  we have after a Galois extension of base $f : Spec(L) \to Spec(k)$ and a positive integer $n$ such that,  $$f^*(M_\chi(F))\otimes \Q(\mu_n)\simeq f^*(M(I_F))\otimes \Q(\mu_n).$$ in . Now $M(I_F)$

%A motive $M \in \DMs{k,\Q}{}$ is geometric iff the canonical map $M \to \underline{RHom}(\underline{RHom}(M, \Q), \Q)$ ($:= M^{**}$) is an isomorphism. For a particular finite type smooth stack we have after a Galois extension of base $k$,  $$M_\chi(F)\otimes \Q(\mu_n)\simeq M(I_F)\otimes \Q(\mu_n).$$
%Now $M(I_F)$ is geometric thus strongly dualisable and $\Q(\mu_n)$ being a finite dimensional $\Q$ vector space is strongly dualisable object too.  Now to use lemma \ref{galois} and \label{coefficient change} we need to show that $\times_k L(M_\chi(F)^{**}) \cong \times_k L(M_\chi(F))^{**}$ and

If $c: \Spec L \rightarrow \Spec k$ is a Galois extension, $c^*(M_\chi (F))\simeq M_\chi (F_L)$ and $c_\# c^*(M_\chi (F))$ contains $M_\chi (F)$ as a factor, by Galois invariance.

There exists a Galois extension $c: \Spec L\rightarrow \Spec k$ and an $n>0$ such that there is a split injection $M_chi (F_L)\hookrightarrow M_\chi (F_L)\otimes \Q(\mu_n)\simeq M(I_F)\otimes \Q (\mu_n)$.

Now, $M(I_F)$ is a factor of $M(X)$ for some variety $X$ (if $F$ is proper, so is $X$). Thus, $M_\chi (F)$ is the factor of a geometric motive (chow motive if $F$ is proper). This implies that $c_\# M_\chi (F_L)$ is a factor of a geoemtric motive (respectively, Chow motive). Finally, as $M_\chi (F)$ is a factor of $c_\# M_\chi (F_L)$, $M_\chi (F)$ is also a factor of a geometric motive (respectively, Chow motive).
\end{proof}

\begin{theorem}
	Let $F$ be a smooth proper Deligne-Mumford stack over a field $k$ of characteristic zero. Then $M_\chi (F) \simeq i (h_\chi (F))$, where $h_\chi (F)$ is To\"{e}n's Chow motive and $i$ is the composition of the isomorphisms, \[i:\mathbf{ChowMot}^{To\ddot{e}n}_\chi (k)\rightarrow\mathbf{ChowMot}(k) \rightarrow \mathbf{DM}^{gm}(k)\]
\end{theorem}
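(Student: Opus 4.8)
The plan is to invoke Manin's identity principle in the category of Chow motives, reducing the statement to the comparison between $H^{2\bullet,\bullet}_\chi$ and To\"en's Chow groups $CH^\bullet_\chi$ already obtained in the paper. Since $\Char k = 0$, the functor $\mathbf{ChowMot}(k)\to\mathbf{DM}^{gm}(k)$ is fully faithful, hence so is the composite $i$; and by the preceding theorem $M_\chi(F)$ lies in its essential image. As every Chow motive is a direct summand of a finite sum of objects $h(Y)(q)$ with $Y$ smooth projective and $q\in\Z$, the restricted Yoneda functor on this generating subcategory of $\mathbf{ChowMot}(k)$ is fully faithful; so it suffices to produce an isomorphism of functors
\[
(Y,q)\ \longmapsto\ \Hom_{\mathbf{DM}^{gm}(k)}\big(M_\chi(F),\,M(Y)(q)[2q]\big)\ \cong\ \Hom_{\mathbf{DM}^{gm}(k)}\big(i(h_\chi(F)),\,M(Y)(q)[2q]\big),
\]
natural in the smooth projective variety $Y$ (of pure dimension $d_Y$, say) and in $q$.

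To compute each side I would combine a K\"unneth formula with Poincar\'e duality. Since $Y$ is a scheme it has trivial inertia, so $C^t_{F\times_k Y}\cong C^t_F\times_k Y$ and $\Lambda_{F\times_k Y}\cong\mathrm{pr}^*\Lambda_F$; combined with smooth base change along $Y\to\Spec k$ and the projection formula, this yields a canonical K\"unneth isomorphism $M_\chi(F)\otimes M(Y)\cong M_\chi(F\times_k Y)$, and, from To\"en's construction of $h_\chi$, likewise $i(h_\chi(F))\otimes M(Y)\cong i(h_\chi(F\times_k Y))$. Using $M(Y)^\vee\cong M(Y)(-d_Y)[-2d_Y]$ (valid for $Y$ smooth projective), the left-hand group becomes a motivic cohomology group $H^{2j,j}_\chi(F\times_k Y,\Q)$ and the right-hand group the corresponding To\"en Chow group $CH^{j}_\chi(F\times_k Y,\Q)$, with the same index $j$ once To\"en's normalisation of $h_\chi$ is taken into account. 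These two coincide by the comparison isomorphism $H^{2\bullet,\bullet}_\chi(-,\Q)\cong CH^{\bullet}_\chi(-,\Q)$ for smooth proper Deligne--Mumford stacks established earlier in the paper, applied to the smooth proper stack $F\times_k Y$; unwinding the identifications gives the desired natural isomorphism, hence $M_\chi(F)\simeq i(h_\chi(F))$.

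The main obstacle will be naturality and compatibility with composition of correspondences: one must check that, under the K\"unneth and Poincar\'e-duality identifications, a correspondence between $Y$ and $Y'$ acts the same way on both functors, so that one obtains an isomorphism of functors on the generating subcategory and not merely an objectwise family of bijections, and that the comparison $H^{2\bullet,\bullet}_\chi\cong CH^\bullet_\chi$ is compatible with To\"en's normalisation of $h_\chi$ on products (in particular that the two degrees match). This is bookkeeping, but made delicate by the Tate twists, the shifts, and the module structure over the sheaf of algebras $\Lambda$. If one prefers to avoid these conventions, an alternative more computational route is a descent argument: by the functoriality of $M_\chi$ (in particular $c^*M_\chi(F)\cong M_\chi(F\times_k L)$ for the base change $c:\Spec L\to\Spec k$) together with Theorem \ref{comp inertia}, after base change to a sufficiently large finite Galois extension $L/k$ and $\otimes_\Q\Q(\mu_n)$, both $M_\chi(F)$ and $i(h_\chi(F))$ reduce to the motive $M(I_{F\times_k L})\otimes\Q(\mu_n)$ of the inertia stack, where the compatibility of Chow motives with $\mathbf{DM}^{gm}$ is classical; one then descends a canonical comparison morphism back to $k$ using the conservativity of these operations (Lemmas \ref{galois} and \ref{coefficient change}), the subtle point there being the matching of the Galois descent data on the two sides.
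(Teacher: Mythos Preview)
Your proposal is correct and follows essentially the same strategy as the paper: both arguments first note that $M_\chi(F)$ is a Chow motive (by the preceding theorem), then invoke Manin's identity principle to reduce the comparison to the identification $H^{2\bullet,\bullet}_\chi\cong CH^\bullet_\chi$ already established in Section~\ref{section-character-stack}. The paper's proof is two sentences long and leaves implicit precisely the K\"unneth and duality manipulations you spell out (reducing $\Hom(M_\chi(F),M(Y)(q)[2q])$ to $H^{2j,j}_\chi(F\times_k Y,\Q)$); your more careful account of what Manin's principle actually requires, and your remarks on naturality with respect to correspondences, fill in detail that the paper omits but does not do differently.
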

\begin{proof}
	By the previous theorem, $M_\chi (F)$ is a Chow motive. By Manin's identity principle \cite[Theorem A.1]{MotivesDM}, it is enough to construct a natural isomorphism between the motivic cohomology groups $H_\chi^{p,q}(F,\Q)$ and To\"{e}n's Chow groups. Such an isomorphism exists by construction of To\"{e}n's Chow groups.
\end{proof}

%\fixme{Three description: \'{e}tale cover, Galois descent upto a finite extension, using $\Q(\mu_\infty)$ and inertia stack}

\section{Riemann Roch}\label{section-riemann-roch}
We will now describe a motivic version of the Riemann-Roch theorem for Deligne-Mumford stacks.

\begin{theorem}\label{theorem-chow}
	Let $F$ be a smooth separated Deligne-Mumford stack over $k$. There is a canonical isomorphism
	\[K_i(F)_\Q \simeq \prod_j CH^j (C^t_F, i)_{\Lambda_F} \]
\end{theorem}

\begin{proof}
	The total chern character induces an isomorphism (see \cite[Remark 6.2.3.10]{riouthesis})
	\[ch: BGL_\Q\simeq \prod_j \mathbf{1}(j)[2j]_\Q.\] Tensoring with the sheaf $\Lambda_F$ of $\Q$-algebras and taking cohomology on $C^t_F$, we get
	\[H^{-i}_{\acute{e}t}(C^t_F, (\prod_j \mathbf{1}(j)[2j])_{\Lambda_F}) \simeq H^{-i}_{\acute{e}t}(C^t_F, BGL_{\Lambda_F}) \]
	By \cite[Theorem 2.4]{toengtheorynotes}, the right hand side is isomorphic to $K_i(F)_\Q$. Furthermore, the left hand side is 
	\begin{align*}
	H^{-i}_{\acute{e}t}(C^t_F, (\prod_j \mathbf{1}(j)[2j])_{\Lambda_F}) &\simeq Hom_{\DAs{C^t_F,\Q}{\acute{e}t}}(\Lambda_F^{\vee}[i], \prod_j \mathbf{1}(j)[2j] )\\
    &\simeq Hom_{\DAs{k,\Q}{\acute{e}t}}( M_{\chi}(F)[i],  \prod_j \mathbf{1}(j)[2j])\\
%\oplus_j H^{-i}_{\acute{e}t}(C^t_F, \Lambda_F(j)[2j])\\
	&\simeq  \prod_j  Hom_{\DAs{k,\Q}{\acute{e}t}}( M_{\chi}(F)[i], \mathbf{1}(j)[2j])\\
%& \oplus_j H^{2j-i}_{\acute{e}t}(C^t_F, \Lambda_F(j))\\
	&(\simeq \prod_j CH^j(C^t_F, i)_{\Lambda_F})
	\end{align*}
\end{proof}

\begin{theorem}\label{theorem-riemann-roch}
	Let $F$ be a smooth separated Deligne-Mumford stack over a field $k$. Then the motivic Chern character induces an isomorphism 
	\[ch: K_i(F)_\Q\rightarrow \prod_j H_\chi^{2j-i,j}(F,\Q).\]
	Moreover, if $p:F\rightarrow X$ is the coarse moduli space, then we have a commutative diagram
	\begin{center}
		\begin{tikzcd}
		K_i(F)_\Q\arrow[r]\arrow[d] & \prod_j H_\chi^{2j-i,j}(F,\Q)\arrow[d]\\
		K_i(X)_\Q \arrow[r] & \prod_j H^{2j-i,j}(X,\Q)
		\end{tikzcd}
	\end{center}
	Here vertical arrows are the projection maps induced by the identity section $e: F\rightarrow C^t_F$.
\end{theorem}

\begin{comment}
\begin{theorem}[Riemann-Roch]
	Let $F$ be a Deligne-Mumford stack over a field. Then
	\[K_i(F)_\Q\simeq \oplus_j H_\chi^{2j-i,j}(F,\Q)\]
\end{theorem}
\end{comment}

\begin{proof}
	By the previous theorem, $K_i(F)_\Q\simeq \prod_j H^{2j-i,j}(C^t_F, \Lambda_F)$.
	Unravelling the right-hand side will give us the desired result.
	\begin{align*}
	H^{2j-i,j}(C^t_F, \Lambda_F) &= Hom_{\SH_\Q(C^t_F)}\big(\mathbf{1}_{C^t_F},\Lambda_F(j)[2j-i])\big)\\
	&= Hom_{\SH_\Q(C^t_F)}\big(f^*\mathbf{1}_k,\Lambda_F\otimes f^*\mathbf{1}_k(j)[2j-i])\big)\\
	&= Hom_{\SH_\Q(C^t_F)}\big(f^*\mathbf{1}_k \otimes \Lambda_F^\vee, f^*\mathbf{1}_k(j)[2j-i])\big)\\
	&= Hom_{\SH_\Q(k)}\big(f_\#(f^*\mathbf{1}_k \otimes \Lambda_F^\vee), \mathbf{1}_k(j)[2j-i])\big)
	\end{align*}
	%The commutativity of the diagram follows from the functoriality of the Chern character.
	Finally, to get the commutative diagram, we will show that the bottom vertical arrow is a summand of the top vertical arrow.
	
	Firstly, observe that we have canonical isomorphisms $K_{\acute{e}t}(F)_\Q\simeq K_i(X)_\Q$ and $M(F)_\Q\simeq M(X)_\Q$, that respect the Chern character. Thus, it suffices to show that $K_{\acute{e}t}(F)_\Q$ and $\mathbf{1}_F$ are summands of $K_{\acute{e}t}(C^t_F)_{\Lambda_F}$ and $\mathbf{1}_{C^t_F}\otimes \Lambda_F$ respectively in $\SH(C^t_F)$.
	To see this, note that the map $p: C^t_F \rightarrow F$ admits a section and we have a decomposition $C^t_F = F\sqcup C^t_{F,\chi \neq 1} $. This induces a splitting on $\SH(C^t_F)$ and, therefore, on the Chern character, $ch_{C^t_F}= ch_F \oplus ch_{\chi \neq 1}$. Thus, we have a commutative diagram, 
	\begin{center}
		\begin{tikzcd}
			BGL_{C^t_F}\otimes \Lambda_F\arrow[r,"ch"]\arrow[d]&\prod_j \mathbf{1}_{C^t_F}(j)[2j] \otimes {\Lambda_F}\arrow[d]\\
			BGL_F \arrow[r,"ch"] & \prod_j \mathbf{1}_F(j)[2j]
		\end{tikzcd}
	\end{center}
	Finally, taking cohomology on $C^t_F$, and precomposing the top horizontal arrow with the isomorphism $K_i(F)_\Q\simeq H^{-i}_{\acute{e}t}(C^t_F, BGL_{\Lambda_F})$ of \cite[Theorem 2.4]{toengtheorynotes}, we have the result.
\end{proof}

\begin{remark}
	As noted in \cite{riouthesis}, the infinite product $\prod_j \mathbf{1}_F(j)[2j]$ is acutally also a direct sum $\oplus_j \mathbf{1}_F(j)[2j]$. And in characteristic zero, the above results also hold for $\oplus \mathbf{1}_F(j)[2j]$.
\end{remark}

\begin{corollary}
	Let $k$ be a field of characteristic zero. Then Theorem \ref{theorem-chow} and \ref{theorem-riemann-roch} hold with the infinite product replaced with the infinite direct sum.
\end{corollary}

\begin{proof}
	We know that $\prod_j \mathbf{1}_F(j)[2j]\simeq \oplus_j \mathbf{1}_F(j)[2j]$. Thus, the only only thing left to prove is that  the infinite direct sum $\oplus_j \mathbf{1}_F(j)[2j]$ commutes with $Hom_{\DA{k,\Q}{\acute{e}t}}(M_\chi (F),-)$. But by Theorem \ref{geometric}, $M_\chi (F)$ is compact so the result follows.
\end{proof}

\section{Properties of the motive}\label{section-motive-properties}

While the new motive which we have defined gives a comparison with $K$-theory, there are other interesting things which happen. For instance it fails to have a projective bundle, and many of the exact triangles only work for stabiliser preserving morphisms.

\begin{example}[Failure of the Projective bundle formula]\label{example-projective-bundle-formula-fails}
	The motive constructed using the character stack does not have a projective bundle formula. Consider the stack $BC_2$ in characteristic different from $2$. Consider the projective bundle on $BC_2$ defined by action $C_2$ on $\P^1$, given by $[x:y]\mapsto [x:-y]$. The induced map on the character stacks $C^t_{[\P^1/C_2]}\rightarrow C^t_{BC_2}$ has the form:
	\begin{center}
	\begin{tikzcd}[column sep =.01ex]
	 {[\P^1/C_2]}	& \coprod\arrow[d] & BC_2 \sqcup BC_2\\
	 BC_2 & \coprod & BC_2
	\end{tikzcd}
	\end{center}
	where $[\P^1/C_2]$ is trivial component while the non-trivial component is given by two copies of $BC_2$. Now, $M_\chi (BC_2):= M(BC_2)\oplus M(BC_2)$. On the other hand, $M_\chi([\P^1/C_2])= M(BC_2)\oplus M(BC_2)(1)[2] \oplus M(BC_2) \oplus M(BC_2) $.
	
	Note, however, that it still produces the correct ranks in cohomology. Thus, the projective bundle formula continues to hold for $K$-groups. % and motivic cohomology. 
\end{example}

The failure of projective bundle formula is a direct consequence of the fact that a general representable morphism between stacks only induces an injection on automorphism groups. For stabilizer perserving morphisms, the new motive behaves exactly like the ordinary motive.

\begin{definition}
	A morphism of stacks $\sX\rightarrow \sY$ is said to be stabiliser preserving if the induced map on the inertia stacks $I_\sX \rightarrow I_\sY\times_\sY \sX$ is an isomorphism. Note that such a map automatically induces an isomorphism $C^t_\sX\rightarrow C^t_\sY\times_\sY \sX$.
\end{definition}

\subsection{Properties of the motive}

The twisted motive satisfies descent for stabiliser preserving Nisnevich distinguished squares, i.e, Nisnevich distinguised square in which every morphism is stabiliser preserving.

\begin{proposition} \label{gysin}
	Let $\sX$ be a smooth Deligne-Mumford stack. Let $\sZ\subset \sX$ be a smooth closed substack of pure codimension $c$. Then there exists a Gysin triangle:
	\[\twistedM(\sX\setminus \sZ)\rightarrow \twistedM(\sX)\rightarrow\twistedM(\sZ)(c)[2c]\rightarrow\twistedM(\sX\setminus\sZ)[1]\]
\end{proposition}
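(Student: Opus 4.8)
The plan is to reduce the Gysin triangle for the twisted motive to the ordinary Gysin triangle on character stacks, using the fact that $M_\chi(-)$ is built by pushing forward a dualisable sheaf along the structure map of the character stack. First I would analyze the relationship between the character stacks $C^t_{\sX}$, $C^t_{\sZ}$ and $C^t_{\sX\setminus\sZ}$. Since $\sZ\hookrightarrow \sX$ is a closed immersion (hence representable and stabiliser preserving, as a monomorphism induces an isomorphism on inertia over its image), we have $C^t_{\sZ}\cong C^t_{\sX}\times_{\sX}\sZ$, so $C^t_{\sZ}$ sits inside $C^t_{\sX}$ as a closed substack, with open complement $C^t_{\sX\setminus\sZ}\cong C^t_{\sX}\times_{\sX}(\sX\setminus\sZ)$. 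Moreover $C^t_{\sZ}$ is smooth of pure codimension $c$ in $C^t_{\sX}$, since $C^t_{\sX}\to\sX$ is étale (it is a finite étale gerbe-like construction on cyclic subgroups) and codimension is preserved under étale pullback.

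Next I would invoke the localisation (Gysin) triangle in $DA^{et}(C^t_{\sX},\Q)$ for the closed immersion $i: C^t_{\sZ}\hookrightarrow C^t_{\sX}$ with open complement $j: C^t_{\sX\setminus\sZ}\hookrightarrow C^t_{\sX}$. Writing $f: C^t_{\sX}\to\Spec k$ for the structure map and $\Lambda := \Lambda_{\sX}^\vee$, one has the standard triangle
\[
j_\# j^* (\mathbf{1}\otimes\Lambda) \to \mathbf{1}\otimes\Lambda \to i_* i^*(\mathbf{1}\otimes\Lambda)(c)[2c] \to j_\# j^*(\mathbf{1}\otimes\Lambda)[1],
\]
using absolute purity for the smooth closed pair and that $i^!(\mathbf{1})\cong \mathbf{1}(-c)[-2c]$. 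The key compatibilities are that $j^*\Lambda_{\sX}^\vee \cong \Lambda_{\sX\setminus\sZ}^\vee$ and $i^*\Lambda_{\sX}^\vee\cong \Lambda_{\sZ}^\vee$ — these hold because $\Lambda$ is a locally constant sheaf defined from the character data, which is manifestly compatible with the base changes along $\sX\setminus\sZ\hookrightarrow\sX$ and $\sZ\hookrightarrow\sX$ once one knows the character stacks base-change correctly, which follows from the stabiliser-preserving property of these immersions. Applying $f_\#$ to the triangle above and using $f_\# j_\# = (f\circ j)_\#$ together with the projection/base-change identities already in the paper (analogous to those proved in the Basic functoriality proposition) then yields exactly
\[
M_\chi(\sX\setminus\sZ)\to M_\chi(\sX)\to M_\chi(\sZ)(c)[2c]\to M_\chi(\sX\setminus\sZ)[1].
\]

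The main obstacle I anticipate is checking that absolute purity and the Gysin triangle are available at the level of $DA^{et}(C^t_{\sX},\Q)$ for the closed pair $(C^t_{\sX}, C^t_{\sZ})$ — i.e., that this pair is a smooth closed pair of (Deligne–Mumford) stacks to which the six-functor formalism of Ayoub applies, and that purity holds with the expected Tate twist $c$. This requires knowing that $C^t_{\sX}$ is itself a smooth separated Deligne–Mumford stack when $\sX$ is (which follows since it is finite étale over $\sX$) and invoking the extension of the localisation triangle to such stacks, e.g.\ via the representable-morphism site and Lemma \ref{extension} to work on $Sm/C^t_{\sX}$, or via a quotient presentation $[X/H]$ reducing to the equivariant case. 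The twisting by the dualisable sheaf $\Lambda$ causes no trouble: tensoring a distinguished triangle with a dualisable object keeps it distinguished, so the only real content is the geometric localisation statement and the compatibility of $\Lambda$ with pullback, both of which are straightforward once the setup is in place.
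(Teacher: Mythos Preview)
Your proposal is correct and follows essentially the same route as the paper: identify $C^t_{\sZ}\hookrightarrow C^t_{\sX}$ as a smooth closed pair of codimension $c$ via the stabiliser-preserving property of the immersion, run the localisation triangle $j_\# j^*\to \mathrm{Id}\to i_*i^*$ on $C^t_{\sX}$, invoke relative purity to identify the closed term, note that $\Lambda^\vee$ restricts correctly along $i$ and $j$, and push forward by $f_\#$. The only point where the paper is more specific than you is the purity step: rather than phrasing it as $i^!\mathbf{1}\simeq\mathbf{1}(-c)[-2c]$, the paper writes it as $q_\# i_*\simeq p_\#\Sigma^{N_{\sZ}\sX}$ and justifies it by citing \cite[Proposition 2.4.31]{cd2019} together with \cite[\S 4]{cdnonrepresentablesixfunctorformalisms}, which handles exactly the obstacle you flagged.
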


\begin{proof}
	As $\sZ\rightarrow \sX$ is stabiliser perserving, we have a cartesian square
	\begin{center}
		\begin{tikzcd}
			C^t_\sZ\arrow[r]\arrow[d] & C^t_\sX\arrow[d]\\
			\sZ\arrow[r] &\sX.
		\end{tikzcd}
	\end{center}
	Moreover, note that $C^t_{\sX\setminus \sZ}=C^t_\sX \setminus C^t_\sZ$.
	Thus, we have a Gysin triangle of ordinary motives,
	\[M(C^t_\sX \setminus C^t_\sZ)\rightarrow M(C^t_\sX)\rightarrow M(\frac{C^t_\sX}{C^t_\sX \setminus C^t_\sZ})\rightarrow M(C^t_\sX \setminus C^t_\sZ)[1].\]
	%It is not immediately clear that this implies the required triangle since the definition of the twisted motive involves applying the $\#$-functor to the dual of the character sheaf $\Lambda_\sX$. Unlike the unit object, this sheaf is not pulled back from the base, hence we must proceed cautiously. 
	Let $i:C^t_\sZ\rightarrow C^t_\sX$ denote the closed immersion induced on the character stacks, with the complementary open immersion $j: C^t_\sU\rightarrow C^t_\sX$. Here, $\sU:= \sX\setminus\sZ$ denotes the open complement of $\sZ$ in $\sX$.
	To prove the Gysin triangle, consider the following localisation sequence in $SH(\sX)$:
	\[j_! j^!\rightarrow Id \rightarrow i_* i^*\]
	As $j$ is an open immersion we have that $j^*\simeq j^!$, and, also, that $j_!\simeq j_\#$.
	Let $q: C^t_\sX\rightarrow\Spec k$ and $p: C^t_\sZ\rightarrow\Spec k$, denote the structure morphisms. By relative purity \cite[Proposition 2.4.31]{cd2019} and \cite[\S 4]{cdnonrepresentablesixfunctorformalisms}, , we have $q_\# i_*\simeq p_\# \Sigma^{N_\sZ \sX}$. Thus, applying $q_\#$ to the localisation sequence above gives,
	\[q_\# j_\# j^* \rightarrow Id \rightarrow p_\# \Sigma^{N_\sZ \sX} i^*\]
	Finally, observe that $\Lambda^\vee_\sZ$ and $\Lambda^\vee_\sU$ are the pullbacks of $\Lambda_\sX$ to the substacks $\sZ$ and $\sU$, respectively. Hence, applying the above localisation sequence to $\Lambda^\vee_\sX$ proves the result.
\end{proof}

\begin{proposition}
	Let $\sX$ be a Deligne-Mumford stack over a field $k$. Let $\sE$ be a vector bundle of rank $n+1$ such that $\sE\rightarrow \sX$ is stabiliser preserving. Then there exists an isomorphism in $\DM{k,\Q}{}$:
	\[\twistedM(\Proj(\sE))\rightarrow \overset{n}{\underset{i=0}{\bigoplus}} \twistedM (\sX)(i)[2i]\]
\end{proposition}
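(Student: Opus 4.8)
The plan is to reduce the claim to the ordinary projective bundle formula over the character stack $C^t_\sX$. Write $g\colon C^t_\sX\to \sX$ for the structure map. Because the structure map $\pi\colon \Proj(\sE)\to\sX$ is stabiliser preserving, the remark following the definition of stabiliser-preserving morphisms gives an isomorphism $C^t_{\Proj(\sE)}\cong C^t_\sX\times_\sX\Proj(\sE)=\Proj(g^*\sE)$; thus $C^t_{\Proj(\sE)}$ is the projective bundle over $C^t_\sX$ attached to the rank-$(n+1)$ bundle $g^*\sE$, and the induced projection $\rho\colon C^t_{\Proj(\sE)}\to C^t_\sX$ is a smooth representable morphism of relative dimension $n$. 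Moreover, since a stabiliser-preserving morphism identifies automorphism groups --- hence their cyclic subgroups and character groups --- the character sheaf satisfies $\Lambda_{\Proj(\sE)}\cong\rho^*\Lambda_\sX$, so $\Lambda^\vee_{\Proj(\sE)}\cong\rho^*\Lambda^\vee_\sX$, exactly as the sheaves $\Lambda^\vee_\sZ$ and $\Lambda^\vee_\sU$ were identified with pullbacks of $\Lambda_\sX$ in the proof of the Gysin triangle above.

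With these identifications in hand, I would compute directly. Let $f\colon C^t_\sX\to\Spec k$ and $f'=f\circ\rho\colon C^t_{\Proj(\sE)}\to\Spec k$ be the structure maps, so that $f'_\#=f_\#\rho_\#$. Then
\[M_\chi(\Proj(\sE))=M\big(f'_\#(\mathbf{1}\otimes\Lambda^\vee_{\Proj(\sE)})\big)=M\big(f_\#\rho_\#\rho^*\Lambda^\vee_\sX\big).\]
By the projective bundle formula in $DA^{et}(C^t_\sX,\Q)$ one has $\rho_\#(\mathbf{1})\cong\bigoplus_{i=0}^n\mathbf{1}(i)[2i]$, and the projection formula $\rho_\#(\rho^*(-)\otimes-)\cong(-)\otimes\rho_\#(-)$ then yields $\rho_\#\rho^*\Lambda^\vee_\sX\cong\bigoplus_{i=0}^n\Lambda^\vee_\sX(i)[2i]$. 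Applying $f_\#$, which commutes with finite direct sums and --- via the projection formula, since $\mathbf{1}(i)[2i]=f^*\mathbf{1}_k(i)[2i]$ --- with Tate twists and shifts, gives $f_\#\rho_\#\rho^*\Lambda^\vee_\sX\cong\bigoplus_{i=0}^n f_\#(\Lambda^\vee_\sX)(i)[2i]$. Finally $M$ is monoidal and triangulated, hence compatible with $\oplus$, with $(-)(i)$ and with $[2i]$, and $M_\chi(\sX)=M(f_\#(\mathbf{1}\otimes\Lambda^\vee_\sX))=M(f_\#\Lambda^\vee_\sX)$; applying $M$ therefore produces the desired isomorphism $M_\chi(\Proj(\sE))\cong\bigoplus_{i=0}^n M_\chi(\sX)(i)[2i]$.

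The only genuinely nontrivial point --- and the main obstacle --- is the first step: verifying that $C^t_{\Proj(\sE)}$ really is the projective bundle $\Proj(g^*\sE)$ with $\rho$ smooth and representable, and that $\Lambda_{\Proj(\sE)}$ is the pullback $\rho^*\Lambda_\sX$. Both rest squarely on the stabiliser-preserving hypothesis, and they are precisely where the formula breaks down in general: in Example \ref{example-projective-bundle-formula-fails} the bundle projection is not stabiliser preserving and $C^t_{\Proj(\sE)}$ acquires extra components absent from $\Proj(g^*\sE)$. Everything after that is the formal projective bundle theorem together with the compatibility of $f_\#$ and of $M$ with direct sums, twists and shifts; one should also note that the projective bundle theorem is available for representable projective bundles over Deligne-Mumford stacks in the \'etale-motivic setting, which follows from the case of schemes by \'etale descent (or from \cite{cd2019} and \cite{cdnonrepresentablesixfunctorformalisms}).
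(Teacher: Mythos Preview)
Your proposal is correct and follows essentially the same approach as the paper: both reduce to the ordinary projective bundle formula over $C^t_\sX$ by using the stabiliser-preserving hypothesis to identify $C^t_{\Proj(\sE)}\simeq \Proj(g^*\sE)$. The paper's proof is extremely terse (it records the cartesian square and then simply says ``this implies the result''), whereas you spell out the intermediate steps the paper leaves implicit---in particular the identification $\Lambda_{\Proj(\sE)}\simeq\rho^*\Lambda_\sX$ and the projection-formula manipulations needed to pass from $\rho_\#\rho^*\Lambda^\vee_\sX$ to $\bigoplus_i \Lambda^\vee_\sX(i)[2i]$.
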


\begin{proof}
	We have a cartesian square
	\begin{center}
	\begin{tikzcd}
		C^t_{\P(\sE)}\arrow[d]\arrow[r] & \P(\sE)\arrow[d]\\
		C^t_\sX \arrow[r] & \sX
	\end{tikzcd}
	\end{center}
	As projective bundles are preserved under base change, we have $C^t_{\P(\sE)}\simeq \P(\sE_{C^t_\sX})$. This implies the result. 
\end{proof}

\subsection{Homotopy invariance}

\begin{lemma} \label{descent}
	Let $f : \mathcal{Y} \to \mathcal{X}$ be a morphism of 
	stacks. Let $x : X_{\bullet} \to \mathcal{X}$ and $y : Y_{\bullet} \to \mathcal{Y}$ and let $f' : Y_{\bullet} \to X_{\bullet}$ be morphisms of simplicial stacks such that 
	\begin{enumerate}
		\item $x$ and $y$ induces isomorphisms $$hocolim_{[n] \in \bigtriangleup}(X_n) \cong \mathcal{X}$$ and  $$hocolim_{[n] \in \bigtriangleup}(Y_n) \cong \mathcal{Y}$$ in $\mathbf{H}^{et}(k)$. 
		\item The following diagram commutes
		\begin{center}
			\begin{tikzcd}
			Y_{\bullet} \arrow{d}{f'} \arrow{r}{y} & \mathcal{Y} \arrow{d}{f}\\
			X_{\bullet} \arrow{r}{x} & \sX
			\end{tikzcd}
		\end{center}
		\item $f'_i$ induces isomorphism in $\DM{k, \Q}{}$ for each $i$.

	\end{enumerate}
	Then $f$ induces isomorphism in $\DM{k, \Q}{}$.
\end{lemma}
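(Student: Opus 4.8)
The plan is to exploit the fact that the motive functor $M\colon \mathbf{H}^{et}(k)\to \DM{k,\Q}{}$ is a left adjoint (indeed it factors through the stabilization and localization functors used to build $\DM{k,\Q}{}$), so it commutes with homotopy colimits. Hence from condition (1) we get $M(\mathcal{X})\simeq \hocolim_{[n]\in\bigtriangleup} M(X_n)$ and $M(\mathcal{Y})\simeq \hocolim_{[n]\in\bigtriangleup} M(Y_n)$ in $\DM{k,\Q}{}$, and the commutative square in (2) shows that $M(f)$ is identified, under these equivalences, with the map $\hocolim_{[n]} M(f'_n)\colon \hocolim_{[n]} M(Y_n)\to \hocolim_{[n]} M(X_n)$.

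First I would make precise the statement that $M$ preserves homotopy colimits of simplicial diagrams: this is where one uses that $\DM{k,\Q}{}$ (equivalently $DA^{et}(k,\Q)$ by \cite{ayoub-hopf}) is obtained as a Bousfield localization of a stable model category of spectra on presheaves, with $M$ the composite of the (left Quillen) suspension spectrum functor with the localization; left Quillen functors between (stable) model categories preserve homotopy colimits, and a filtered/simplicial homotopy colimit in the triangulated homotopy category can be computed as the mapping telescope / bar construction. Second, I would invoke condition (3): each $M(f'_n)$ is an isomorphism in $\DM{k,\Q}{}$. Third, I would conclude by the general principle that a map of (homotopy) colimit diagrams which is a levelwise equivalence induces an equivalence on homotopy colimits — concretely, realize both simplicial objects via the bar construction $|M(X_\bullet)|$, $|M(Y_\bullet)|$ as iterated homotopy pushouts/cofibers, and run an induction on skeleta: if $M(f'_n)$ is an equivalence for all $n$, then so is the map on $k$-skeleta for all $k$ (using that cofibers of equivalences are equivalences in a triangulated category), and passing to the colimit over $k$ (a sequential homotopy colimit, again preserved) gives that $|M(f'_\bullet)|$ is an equivalence. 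Combining with step one's identifications, $M(f)$ is an isomorphism in $\DM{k,\Q}{}$, which is the assertion.

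The main obstacle I anticipate is purely foundational rather than conceptual: carefully justifying that the functor sending a stack (or simplicial stack) to its motive genuinely commutes with the relevant homotopy colimits, and that the comparison map from $\hocolim M(X_\bullet)$ to $M(\mathcal{X})$ is the expected one once $\mathcal{X}$ is presented as $\hocolim X_\bullet$ in $\mathbf{H}^{et}(k)$. This requires keeping track of the model-categorical left-derived functors (the functors $Lf_\sharp$, suspension, \'etale and $\mathbb{A}^1$-localizations) and checking they are all left Quillen, so that their composite preserves homotopy colimits; the presheaf-level statement that evaluation of a simplicial presheaf is a homotopy colimit of its levels is standard but must be threaded through the localizations. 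None of the other steps — the skeletal induction, the two-out-of-three and cofiber arguments in the triangulated category — present real difficulty. I would also remark that the hypothesis that the $f'_n$ are morphisms of \emph{schemes} (or representable) is what makes condition (3) meaningful and lets one apply ordinary motive functoriality levelwise; no properness or smoothness of $f$ itself is needed.
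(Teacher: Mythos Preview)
Your proposal is correct and takes essentially the same approach as the paper: the paper's proof is the one-line remark ``This is precisely homotopy invariance of homotopy colimits,'' which is exactly the principle you unpack---$M$ preserves homotopy colimits as a left Quillen functor, so conditions (1)--(2) identify $M(f)$ with $\hocolim_n M(f'_n)$, and condition (3) plus the fact that levelwise equivalences of simplicial objects induce equivalences on geometric realizations finishes the argument. Your foundational caveats about tracking the left Quillen functors through the various localizations are well-placed but routine, and the paper simply takes them for granted.
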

\begin{proof}
	This is precisely homotopy invariance of homotopy colimits. 
	
\end{proof}

\begin{theorem}
	Let $p : \mathcal{V} \to \mathcal{X}$ be a vector bundle over a smooth Deligne-Mumford stack $\mathcal{X}$,  then $\twistedM(\mathcal{V}) \cong \twistedM(\mathcal{X})$ in $\DM{k, \Q}{}$.
	
\end{theorem}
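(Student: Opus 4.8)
The plan is to reduce the statement to homotopy invariance for the character stack $C^t_{\mathcal{V}}$ and then apply Lemma~\ref{descent}. The first step is to observe that the projection $p : \mathcal{V} \to \mathcal{X}$ is stabiliser preserving: a vector bundle has trivial relative inertia, so the induced map $I_{\mathcal{V}} \to I_{\mathcal{X}} \times_{\mathcal{X}} \mathcal{V}$ is an isomorphism, and hence so is $C^t_{\mathcal{V}} \to C^t_{\mathcal{X}} \times_{\mathcal{X}} \mathcal{V}$. In particular $C^t_{\mathcal{V}}$ is a vector bundle over $C^t_{\mathcal{X}}$, namely the pullback of $\mathcal{V}$ along $C^t_{\mathcal{X}} \to \mathcal{X}$, and the pullback of $\Lambda_{\mathcal{X}}^\vee$ along $C^t_{\mathcal{V}} \to C^t_{\mathcal{X}}$ is $\Lambda_{\mathcal{V}}^\vee$.

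Next I would prove the statement by \'etale descent, using Lemma~\ref{descent}. Choose an \'etale presentation $x : X_\bullet \to \mathcal{X}$ by (simplicial) schemes inducing $\hocolim X_n \simeq \mathcal{X}$ in $\mathbf{H}^{et}(k)$; pulling back $\mathcal{V}$ gives $Y_\bullet := \mathcal{V} \times_{\mathcal{X}} X_\bullet \to \mathcal{V}$, with each $Y_n \to X_n$ a genuine vector bundle of schemes. The map $f' : Y_\bullet \to X_\bullet$ then satisfies hypotheses (1) and (2) of Lemma~\ref{descent} by construction, and hypothesis (3) holds because for schemes $M_\chi$ agrees with the ordinary motive $M$ (the character stack of a scheme is the scheme itself, and $\Lambda$ is trivial there), so $M_\chi(Y_n) = M(Y_n) \simeq M(X_n) = M_\chi(X_n)$ by ordinary $\mathbb{A}^1$-homotopy invariance of motives. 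Hence $M_\chi(\mathcal{V}) \cong M_\chi(\mathcal{X})$.

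Alternatively, and perhaps more cleanly, one can argue directly on $C^t_{\mathcal{X}}$: since $M_\chi(\mathcal{X}) = M(f_\#(\mathbf{1}_{C^t_{\mathcal{X}}} \otimes \Lambda_{\mathcal{X}}^\vee))$ with $f : C^t_{\mathcal{X}} \to \Spec k$, and $C^t_{\mathcal{V}} \to C^t_{\mathcal{X}}$ is a vector bundle $q$, homotopy invariance in $DA^{et}(C^t_{\mathcal{X}}, \Q)$ gives $q_\# q^* \mathbf{1} \simeq \mathbf{1}$; tensoring with $\Lambda_{\mathcal{X}}^\vee$ (which is dualisable, hence the projection formula applies) and pushing forward along $f$ yields the isomorphism. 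The one point requiring a little care here is that homotopy invariance for vector bundles over a Deligne-Mumford \emph{stack} base — rather than a scheme — must be invoked; this again follows by \'etale descent from the scheme case, exactly as in Lemma~\ref{descent}.

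The main obstacle, such as it is, is bookkeeping rather than anything deep: one must verify that the \'etale presentation of $\mathcal{X}$ pulls back to a compatible presentation of $\mathcal{V}$ (immediate, since pullback of an \'etale hypercover along any morphism is again one) and that $M_\chi$ of a scheme reduces to the ordinary motive so that classical $\mathbb{A}^1$-invariance applies levelwise. Both are routine given the constructions in Section~\ref{section-character-stack}, so the proof is short.
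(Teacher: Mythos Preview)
Your central claim---that a vector bundle $p:\mathcal{V}\to\mathcal{X}$ is stabiliser preserving---is false. Representability only guarantees that $I_{\mathcal{V}}\to I_{\mathcal{X}}\times_{\mathcal{X}}\mathcal{V}$ is a monomorphism, not an isomorphism. For a concrete counterexample take $\mathcal{X}=B(\Z/2)$ and $\mathcal{V}=[\A^1/(\Z/2)]$ the sign representation: the nonzero points of $\mathcal{V}$ have trivial automorphism group, so $I_{\mathcal{V}}$ is $\mathcal{V}\sqcup B(\Z/2)$ while $I_{\mathcal{X}}\times_{\mathcal{X}}\mathcal{V}$ is $\mathcal{V}\sqcup\mathcal{V}$. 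In particular $C^t_{\mathcal{V}}$ is \emph{not} the pullback of $\mathcal{V}$ along $C^t_{\mathcal{X}}\to\mathcal{X}$. This is the same phenomenon that causes the projective bundle formula to fail in Example~\ref{example-projective-bundle-formula-fails}.

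This single error undermines all three paragraphs. Your descent argument in the second paragraph fails for a related reason: if $X_\bullet\to\mathcal{X}$ is an \'etale hypercover by schemes, then $M_\chi(X_n)=M(X_n)$ and $\hocolim_n M_\chi(X_n)=M(\mathcal{X})$, which is \emph{not} $M_\chi(\mathcal{X})$; the functor $M_\chi$ simply does not satisfy \'etale descent along non-stabiliser-preserving covers. Your third paragraph has the right shape---one does want to show that $C^t_{\mathcal{V}}\to C^t_{\mathcal{X}}$ is a vector bundle and that $\Lambda_{\mathcal{V}}$ is pulled back from $\Lambda_{\mathcal{X}}$---but both facts require proof and neither follows from the stabiliser-preserving claim.

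The paper's route is as follows. First use Theorem~\ref{comp inertia} together with conservativity of coefficient extension (Lemma~\ref{coefficient change}) and Galois descent (Lemma~\ref{galois}) to reduce to showing that $M(I_{\mathcal{V}})\cong M(I_{\mathcal{X}})$. Next choose an \'etale cover of $\mathcal{X}$ by quotient stacks $[U_i/H_i]$ which is \emph{stabiliser preserving}; this ensures that passing to inertia commutes with the \v{C}ech nerve, so Lemma~\ref{descent} applies to the inertia stacks. One is then reduced to the case $\mathcal{X}=[U/G]$, $\mathcal{V}=[V/G]$ with $V\to U$ a $G$-equivariant vector bundle. Here $I_{\mathcal{X}}=[\coprod_{g\in G}U^g/G]$ and $I_{\mathcal{V}}=[\coprod_{g\in G}V^g/G]$, and the map is induced by $V^g\to U^g$. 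The point---which replaces your incorrect stabiliser-preserving claim---is that for a finite-order automorphism $g$ acting linearly on a vector bundle (in the tame situation), the fixed locus $V^g\to U^g$ is a sub\emph{bundle}, so $I_{\mathcal{V}}\to I_{\mathcal{X}}$ is itself a vector bundle and ordinary homotopy invariance for $M$ finishes the argument.
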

\begin{proof}
	By Galois descent and Theorem \ref{comp inertia} we are reduced to show that the map $p' : I_{\mathcal{V}} \to I_{\mathcal{X}}$ induced by $p$ induces an isomorphism $M( I_{\mathcal{V}}) \cong M( I_{\mathcal{X}})$ in  $\DM{k, \Q}{}$. Let $\coprod_i [U_i/H_i] \to \sX$ be an \'etale covering constructed from an etale covering of its coarse space. Let $X_{\bullet}$ be the corresponding Cech simplicial stack and $X_{\bullet}\to \sX$ is an etale weak equivalence. Note that each simplcial dimension stack $X_i$ of $X_{\bullet}$ is a quotient stack. Therefore $V_{\bullet} := X_{\bullet} \times_{\sX} \sV \to \sV$ is also an etale weak equivalence and $V_i \to X_i$ are vector bundles.  Let for any simplcial stack $Y_{\bullet}$, let $I_{Y_{\bullet}}$ be the simplicial stack such that  $(I_{Y_{\bullet}})_i := I_{Y_i}$. 
	
	\textbf{Claim } $I_{V_{\bullet}} \to I_{\sV}$ and $I_{X_{\bullet}} \to I_{\sX}$ are etale weak equivalence.\\
	\textbf{Proof of Claim}
	Let $f:\coprod_i [U_i/H_i] \to \sX$ be an \'etale covering. As $\sX$ has finite inertia, we can also arrange $f$ to be stabiliser preserving as the locus where $f$ is stabiliser preserving is open \cite[Proposition 3.5]{RydhQuotient}. This implies that the maps $I_{V_{0}} \to I_{\sV}$ and $I_{X_{0}} \to I_{\sX}$ admit  \'{e}tale local sections. This implies the claim.

	Therefore from the claim and lemma \ref{descent} we are reduced to the case where $\sX = [U/G]$ where $U$ is an affine scheme and $G$ is a finite group and $\sV$ is then given by a $G$-equivarent vector bundle over $U$. Therefore, there exists a $G$ equivariant vector bundle $V \to U$ such that $\sV = [V/G]$. In this case $I_{\sV} \to I_{\sX}$ is vector bundle.  
\end{proof}

\section{Computations}\label{section-computations}
In this section, we present some computations of the motive $M_\chi (F)$ for various Deligne-Mumford stacks.

\begin{example}[The twisted motive of $\overline{\mathcal{M}}_{1,n}$]
Using the computations of \cite{pagani-chen-ruan}, we can also the motive $M_\chi (\overline{\mathcal{M}}_{1,n})_{\Q(\mu_\infty)}$, which by Theorem \ref{comp inertia} is the motive of the inertia stack $I(\overline{\mathcal{M}}_{1,n})$ with $\Q(\mu_\infty)$-coefficients. We refer the reader to \textit{loc. cit.} for notations.

In this case, each component $F_k$ of $I(\overline{\mathcal{M}}_{1,n})$ has the form \cite[Corollary 3.23]{pagani-chen-ruan},
\[F_k = A\times \overline{\mathcal{M}}_{0,n_1}\times \overline{\mathcal{M}}_{0,n_2} \times \overline{\mathcal{M}}_{0,n_3}\times \overline{\mathcal{M}}_{0,n_4}\]
where $n_i\geq 3$ and $A\in \lbrace B\mu_3, B\mu_4, B\mu_6, \P(4,6),\P(2,4),\P(2,2)\rbrace$. Thus, the motive $M_\chi (\overline{\mathcal{M}}_{1,n})_{\Q(\mu_\infty)}$ is a disjoint union of the motives of $M(F_i)_{\Q(\mu_\infty)}$.

For $\overline{\mathcal{M}}_{1,1}$, this turns out to be \cite[Corollary]{pagani-chen-ruan},
\begin{align*}
M_\chi (\overline{\mathcal{M}}_{1,1})_{\Q(\mu_\infty)} &= (M(\overline{\mathcal{M}}_{1,1})^{\oplus 2}\oplus M(B\mu_4)^{\oplus 2}\oplus M(B\mu_6)^{\oplus 4})_{\Q(\mu_\infty)}\\
&=\Q(\mu_\infty)^{\oplus 8}\oplus (\Q(\mu_\infty)(1)[2])^{\oplus 2}.
\end{align*}

\end{example}

\begin{example}
Let $\sX$ be a smooth Deligne-Mumford stack.Consider the decomposition of $C^t_{\sX} = \sX \amalg C^t_{\sX, +}$ coming from the section $\sX \to C^t_{\sX} $ given by $(u) \mapsto (u, \left\{e \right\})$. The sheaf $\Lambda_{\sX}$ restricted to the component $\sX$ is $\mathbb{Q}$. Therefore $M(\sX)$ is a direct factor of $\twistedM(\sX)$. Following Toen ( \cite{Toen}) we define the complimentary direct factor of $\twistedM(\sX)$ by $M_{\chi \neq 1}(\sX)$.  Then $\twistedM(\sX) = M(\sX) \oplus M_{\chi \neq 1}(\sX)$. The part  $M_{\chi \neq 1}(\sX)$ can be simplified in the following cases.

Let $X$ be a smooth $k$ acheme and let $G$ be a finite group acting on it. Let $c(G)$ be a set of representatives of conjugacy classes of cyclic subgroups whose order is relatively prime to $char(k)$. For $c \in c(G)$ let $X^c$ be the fixed locus of of $c$,  $s(c)$ denote the set of injective characters of $c$ and let $N_c$ be the normaliser of $c$.  Note that
$C^t_{[X/G]} \cong \amalg_{c \in c(G)}[X^c/N_c]$. For $\sX = [X/G]$, it is easy to see that $\Lambda_{\sX}^{\vee}|_{X^c} \cong \oplus_{s(c)}1_{X^c}$. The group $N_c$ acts via conujgation on $X^c \times s(c)$ and let $j : [X^c \times s(c)/ N_c] \to [X^c/N_c]$ be the corresponding map of stacks. Then $\Lambda_{\sX}^{\vee}|_{ [X^c/N_c]} \cong j_{\sharp}(1)$. Let $f : C^t_{[X/G]} \to Spec(k)$ be the structure map. Then $$f_{\sharp}(\Lambda_{\sX}^{\vee}) \cong \oplus_{c \in c(G)} f_{\sharp}(\Lambda_{\sX}^{\vee}|_{ [X^c/N_c]}) \cong $$
$$\cong \oplus_{c \in c(G)} M([X^c \times s(c)/ N_c]) \cong  \oplus_{c \in c(G)} M(X^c \times s(c))^{N_c}.$$

\end{example}

\begin{example}

Let $\sX$ be a Gerbe, therefore $C^t_{\sX} \to \sX$ is finite \'etale and locally for the \'etale topology on the coarse moduli space $\sX \to X$, the stack $\sX$ is isomorphic to $X \times BH$ for some finite group $H$. Following Toen (\cite[\S  Examples 2]{Toen}) , there exists a finite \'etale covering $Y \to X$, suhc that $\twistedM(\sX) \cong M(Y)$. Moreover $Y \to X$ has a section and therefore $M(Y) \cong M(X) \oplus M'$, where $M(X) \cong M(\sX)$.
\end{example}
\
%Suppose $\G_m$ be acts on a smooth Deligne-Mumford stack $\sX$. Then we define an action of $\G_m$ on $C^t_{\sX}$ as follows : For any $k$-scheme $U$ and object $(u,C)$ as defined in Section \ref{section-character-stack}, and $g \in \G_m(U)$ we define $g.(u, C) := (g.u, g.C)$. Given a morphism $\alpha : u \to u'$ such that $\alpha^{-1}C'\alpha = C$, then $g.\alpha : g.u \to g.u'$ gives a morphism such that $(g.\alpha)^{-1}g.C' g.\alpha = g.C$. It is easy to verify that this a gives an action of $\G_m$ on $C^t_{\sX}$ (see \cite[Definition 1.3, 2.1, Proposition 1.5]{romagny-groups-actions} for the definition and the properties of group actions on stacks) such that $\pi : C^t_{\sX} \to \sX$ is $\G_m$ invariant.  Note that %theorem 3.3 (Group actions on stacks and applications, Romagny) 
%the fixed stack $\sX^{\G_m}$ is a closed substack. 

\begin{theorem}
Let $\sX$ be a smooth proper Deligne-Mumford stack with a $\G_m$ action. Let $x : X \to \sX$ be a $\G_m$-equivariant affine \'etale surjective atlas. Let $\sZ := \sX^{\G_m} = \coprod_i \sZ_i$ be the decomposition into connected components of the fixed point locus. Then  $\twistedM(\sX) \cong \oplus_{i=0}^n \twistedM(\sZ_i)(c_i)[2c_i]$, for $c_i$'s in $\mathbb{N}$. 
\end{theorem}
\begin{proof}
Let $\sX$ be a smooth proper Deligne-Mumford stack with a $\G_m$ action and let $x : X \to \sX$ be a $\G_m$-equivariant affine \'{e}tale atlas. Let $\sZ$ be the fixed point closed substack $\sX$ and let $\sZ_i$ be connected componenets of $\sZ$. Then by \cite[Proposition 5]{oprea-tautological-classes}, we get that $\sX$ can be covered by locally closed disjoint substacks $\sZ_i^+$, containing $\sZ_i$  which are affine fibrations over $\sZ_i$. Note that $C^t_{\sZ_i^+} = \sZ_i^+ \times_{\sX}  C^t_{\sX}$ are locally closed  disjoint substacks covering  $C^t_{\sX}$ and $C^t_{\sZ_i} = \sZ_i \times_{\sX}  C^t_{\sX} = \sZ_i \times_{\sZ_i^+}  C^t_{\sZ_i^+}$. Therefore, the affine fibration $\sZ_i^+ \to \sZ_i$,  induces an affine fibration  $\sZ_i^+ \times_{\sZ_i} C^t_{\sZ_i} \to  C^t_{\sZ_i} $, but $\sZ_i^+ \times_{\sZ_i} C^t_{\sZ_i}\cong  C^t_{\sZ_i^+}$, by what we have described. Therefore we get a covering of $C^t_{\sX}$ by disjoint localy closed $C^t_{\sZ_i^+}$ containing  $C^t_{\sZ_i}$ together with affine fibrations $C^t_{\sZ_i^+} \to C^t_{\sZ_i}$. Therefore, $M(C^t_{\sZ_i^+} ) \cong  M(C^t_{\sZ_i})$ (using utsav choudhury and skowera, corollary 3.2), which implies that $\twistedM(\sZ_i^+) \cong \twistedM(\sZ_i)$. As $\sX$ is smooth proper so are $\sZ_i$'s. Therefore $\twistedM(\sZ_i^+)$ are Chow motives by Theorem \ref{geometric}. Therefore using similar arguments as in \cite[Proposition 4.4]{choudhury-skowera-cellular} together with Proposition \ref{gysin}, we get $\twistedM(\sX) \cong \oplus_{i=0}^n \twistedM(\sZ_i^+)(c_i)[2c_i]$, where $c_i =codim_{\sX}(\sZ_i^+)$. 
\end{proof}

\begin{theorem}
Let $\sX$ be a smooth proper Deligne-Mumford stack with a $\G_m$ action. Let $x : X \to \sX$ be a $\G_m$-equivariant affine \'etale surjective atlas. Let $C^t_{\sX}$ be endowed with the $\G_m$ action which makes the morphism $C^t_{\sX} \to \sX$ a $\G_m$-invariant morphism. Let $\sZ' :=  C^t_{\sX} = \coprod_j \sZ'_j$ be the decomposition into fixed point locus. Then
 $\twistedM(\sX) \cong \oplus_{i=0}^n M(\sZ'_i)_{\chi}(d_i)[2d_i]$, for $d_i$'s in $\mathbb{N}$ and  for any smooth substack $\sY \subset C^t_{\sX}$ we define  $M(\sY)_{\chi}:= q_{\#}(\Lambda^\vee|_{\sY})$ where $q : \sY \to Spec(k)$ is the structure maps.

\end{theorem}

\begin{proof}
Suppose $\sX$ is a smooth  Deligne-Mumford stack such with $\G_m$ action such that there exists a $\G_m$ equivariant affine \'etale surjective atlas $x : X \to \sX$. Then the \'etale affine atlas $\pi^*(x) : X \times_{\sX} C^t_{\sX} \to  C^t_{\sX}$ is also $\G_m$ equivariant. Therefore again by \cite[Proposition 5]{oprea-tautological-classes}, we get that $C^t_{\sX}$ can be covered by locally closed disjoint substacks $\sZ_i^{+,'}$, containing $\sZ'_i$  which are affine fibrations over $\sZ'_i$ . Note that for any smooth closed substack $\sY \subset C^t_{\sX}$ of codimension $c$ and open compliment $\sU$ using the same argument as Proposition \ref{gysin} we get the Gysin triangle:
	\[M(\sU)_{\chi} \rightarrow \twistedM(\sX)\rightarrow M(\sY)_{\chi}(c)[2c]\rightarrow M(\sU)_{\chi}[1]\]

As $M(\sZ_i^{+,'}) \cong M(\sZ'_i)$, using \cite[Corollary 3.2]{choudhury-skowera-cellular}), this implies $M(\sZ_i^{+,'})_{\chi} \cong M(\sZ'_i)_{\chi}$. The same method of Theorem \ref{geometric} shows that $M(\sZ'_i)_{\chi}$'s are Chow motives. Now our claim follows exactly the same way as in \cite[Proposition 4.4]{choudhury-skowera-cellular}. Here $d_i$ is the codimension of $\sZ_i^{+,'}$.
\end{proof}

\bibliography{mybib.bib}
\bibliographystyle{alphanum}

\end{document}